\definecolor{linkred}{rgb}{0.7,0.2,0.2}
\definecolor{linkblue}{rgb}{0,0.2,0.6}
\numberwithin{figure}{section}
\DeclareFontFamily{OMS}{rsfs}{\skewchar\font'60}
\DeclareFontShape{OMS}{rsfs}{m}{n}{<-5>rsfs5 <5-7>rsfs7 <7->rsfs10 }{}
\DeclareSymbolFont{rsfs}{OMS}{rsfs}{m}{n}
\DeclareSymbolFontAlphabet{\scr}{rsfs}
\DeclareSymbolFontAlphabet{\scr}{rsfs}
\DeclareFontFamily{U}{mathx}{\hyphenchar\font45}
\DeclareFontShape{U}{mathx}{m}{n}{
      <5> <6> <7> <8> <9> <10>
      <10.95> <12> <14.4> <17.28> <20.74> <24.88>
      mathx10
      }{}
\DeclareSymbolFont{mathx}{U}{mathx}{m}{n}
\DeclareMathAccent{\wcheck}{0}{mathx}{"71}
\DeclareMathOperator{\Aut}{Aut}
\DeclareMathOperator{\Ann}{Ann}
\DeclareMathOperator{\rank}{rank}
\DeclareMathOperator{\red}{red}
\DeclareMathOperator{\Spec}{Spec}
\DeclareMathOperator{\Sym}{Sym}
\DeclareMathOperator{\Bs}{Bs}
\DeclareMathOperator{\Div}{Div}
\newcommand{\sA}{\scr{A}}
\newcommand{\sF}{\scr{F}}
\newcommand{\sM}{\scr{M}}
\newcommand{\sO}{\scr{O}}
\newcommand{\sT}{\scr{T}}
\newcommand{\bB}{\mathbb{B}}
\newcommand{\bN}{\mathbb{N}}
\newcommand{\bQ}{\mathbb{Q}}
\newcommand{\bR}{\mathbb{R}}
\theoremstyle{plain}
\newtheorem{thm}{Theorem}[section]
\newtheorem{cor}[thm]{Corollary}
\newtheorem{defn}[thm]{Definition}
\newtheorem{lem}[thm]{Lemma}
\newtheorem{prop}[thm]{Proposition}
\theoremstyle{remark}
\newtheorem{c-n-d}[thm]{Claim and Definition}
\newtheorem{construction}[thm]{Construction}
\newtheorem{convention}[thm]{Convention}
\newtheorem{notation}[thm]{Notation}
\newtheorem{rem}[thm]{Remark}
\newtheorem*{rem-nonumber}{Remark}
\newtheorem{setting}[thm]{Setting}
\numberwithin{equation}{thm}
\setlist[enumerate]{label=(\thethm.\arabic*), before={\setcounter{enumi}{\value{equation}}}, after={\setcounter{equation}{\value{enumi}}}}
\newcommand{\lra}{\longrightarrow}
\newcommand{\factor}[2]{\left. \raise 2pt\hbox{$#1$} \right/\hskip -2pt\raise -2pt\hbox{$#2$}}
\title{The stable and augmented base locus under finite morphisms}
\author{Tanuj Gomez}
\address{Tanuj Gomez, Mathematisches Institut, Albert-Ludwigs-Universität Freiburg, Ernst-Zermelo-Straße 1, 79104 Freiburg im Breisgau, Germany}
\email{\href{mailto:tanuj.gomez@math.uni-freiburg.de}{tanuj.gomez@hotmail.com}}
\thanks{Supported by the DFG-Graduiertenkolleg GK1821 ``Cohomological Methods in Geometry'' at the University of Freiburg.}
\date{\today}
\keywords{augmented base locus, stable base locus, finite morphisms, foliation}
\subjclass[2020]{14A10, 14E99}
\begin{document}

\begin{abstract}
  We study the pullback of the stable and augmented base locus under a finite
  surjective morphism between normal varieties over a perfect field.
\end{abstract}

\maketitle

\section{introduction}

The \emph{stable} and \emph{augmented} base locus, denoted
$\bB(\cdot)$ and $\bB_+(\cdot)$,
are fundamental invariants in Birational Geometry and in Complex Geometry, particularly when
positivity and asymptotic behaviour of divisors, line bundles, or vector bundles
are discussed \cite{Naka00, BDPP12, ELMNP6}.

In this article, we study the behaviour of the stable and augmented
base locus under the pullback via a finite surjective morphism.
Previous results in this direction, \cite{dicerlom20} work in the setting of complex manifolds.
They, however, rely on powerful analytic methods such as currents and Lelong numbers \cite{Bouck04}.
If one is willing to impose strong positivity assumptions on the divisors, using \cite{Dicerbo19} and \cite{bir13},
these results can be extended to projective schemes. Our main result, Theorem \ref{mainthm2},
generalises these results to normal varieties over a perfect field.

\begin{thm}\label{mainthm2}
  Let $X$ and $Y$ be normal projective varieties over a perfect field $k$.
	Let $f: X \lra Y$ be a finite surjective morphism over $k$. Then the following hold:
	\begin{enumerate}
		\item For every $\bQ$-divisor $D$ on $Y$, we have an equality of reduced schemes
		$\bB(f^*D) = f^{-1}\bB(D)$.\label{mainthm2:1}
		\item For every $\bR$-divisor $\Delta$ on $Y$, we have an equality of reduced schemes
		$\bB_{+}(f^*\Delta) = f^{-1}\bB_{+}(\Delta)$.\label{mainthm2:2}
	\end{enumerate}
\end{thm}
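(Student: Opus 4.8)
The plan is to establish both inclusions in \ref{mainthm2:1}, then deduce \ref{mainthm2:2} formally, organising the difficult inclusion by two reductions: first on the ground field, then on the type of the morphism. Since $k$ is perfect, the base change $X_{\bar k}\to X$ is faithfully flat and surjective, and both formation of $f^{-1}$ and the stable base locus commute with it (global generation and $h^0$ are preserved under extension of the base field); as the asserted identity is one of \emph{reduced} schemes, it may be checked after this base change, so I may assume $k$ \emph{infinite}. Next, letting $K_s$ be the separable closure of $K(Y)$ inside $K(X)$ and $Z$ the normalisation of $Y$ in $K_s$, the morphism factors as $X\lra Z\lra Y$ with $Z\lra Y$ generically étale and $X\lra Z$ purely inseparable. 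Because the equality in \ref{mainthm2:1} is transitive in $f$ — if it holds for two composable finite surjective morphisms it holds for their composite, by substituting one into the other — it suffices to treat these two cases separately.

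The inclusion $\bB(f^*D)\subseteq f^{-1}\bB(D)$ holds in complete generality and is the easy one: for each $m$ with $mD$ Cartier, pulling back sections gives $H^0\bigl(Y,\sO_Y(mD)\bigr)\lra H^0\bigl(X,\sO_X(mf^*D)\bigr)$, and if $s$ does not vanish at $f(x)$ then $f^*s$ does not vanish at $x$; hence $\Bs|mf^*D|\subseteq f^{-1}\Bs|mD|$, and intersecting over all $m$ yields the claim. For the purely inseparable case the reverse inclusion is then almost immediate. Here $f$ is a universal homeomorphism, so each fibre $f^{-1}(y)$ is a single point $x$, and for $a\in K(X)$ of degree $p^e$ one has $a^{p^{e}}\in K(Y)$; normality of $Y$ shows that for a local section $t$ of $f^*\sO_Y(mD)$ the $p^{e}$-th power $t^{p^{e}}$ descends to a genuine section $s$ of $\sO_Y(p^{e}mD)$ with $f^*s=t^{p^{e}}$. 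As $f$ is a homeomorphism, $s$ vanishes at $y$ exactly where $t$ vanishes at $x$; thus $t(x)\neq 0$ forces $s(y)\neq 0$, so $x\notin \bB(f^*D)$ implies $y\notin\bB(D)$, giving $f^{-1}\bB(D)\subseteq\bB(f^*D)$.

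The separable case carries the main obstacle. The naïve tool is the norm $\mathrm{Nm}\colon H^0(X,mf^*D)\lra H^0(Y,\deg f\cdot mD)$, but $\mathrm{Nm}(t)$ vanishes at $y$ as soon as $t$ vanishes at \emph{some} point of the fibre, not only at $x$; a single non-vanishing section at $x$ need not survive the norm. I resolve this by promoting $f$ to its Galois closure. Let $\wtilde X\lra X\lra Y$ be chosen with $\pi\colon\wtilde X\lra Y$ Galois with group $G$ and $g\colon\wtilde X\lra X$ also Galois. For a Galois morphism the identity is clear: $f^*D$ is invariant, so $\bB$ of its pullback is a $G$-invariant closed set, hence of the form $f^{-1}W$; the easy inclusion gives $W\subseteq\bB(D)$, while for $y\notin W$ the whole fibre lies off $\bB(f^*D)$, and — using that $k$ is infinite, so a finite union of section-hyperplanes cannot exhaust $H^0$ — one finds a single section non-vanishing on the entire fibre, whose norm is non-zero at $y$; thus $W=\bB(D)$. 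Knowing the statement for $\pi$ and for $g$, the chain $g^{-1}f^{-1}\bB(D)=\pi^{-1}\bB(D)=\bB(\pi^*D)=\bB(g^*f^*D)=g^{-1}\bB(f^*D)$ together with the injectivity of $g^{-1}$ on reduced closed subschemes (valid since $g$ is surjective) yields $f^{-1}\bB(D)=\bB(f^*D)$, completing \ref{mainthm2:1}.

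For \ref{mainthm2:2} I first record the $\bR$-divisor analogue of \ref{mainthm2:1}, proved by the identical argument with linear systems replaced by effective $\bR$-divisors that are $\bR$-linearly equivalent to the given class. Then \ref{mainthm2:2} is formal: finite pullback preserves ampleness, so fixing an ample $A$ on $Y$ and using $\bB_{+}(\,\cdot\,)=\bB(\,\cdot\,-\varepsilon A)$ for all small $\varepsilon>0$ gives $\bB_{+}(f^*\Delta)=\bB\bigl(f^*\Delta-\varepsilon f^*A\bigr)=\bB\bigl(f^*(\Delta-\varepsilon A)\bigr)=f^{-1}\bB(\Delta-\varepsilon A)=f^{-1}\bB_{+}(\Delta)$, where the third equality is the $\bR$-version of \ref{mainthm2:1}. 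I expect the Galois-closure reduction in the separable case to be the delicate point, both in arranging normal models on which $\pi$ and $g$ are honest finite surjective morphisms of normal varieties, and in the moving step producing a section non-vanishing along a full fibre.
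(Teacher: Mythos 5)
Your treatment of part \ref{mainthm2:1} is essentially correct and shares the paper's architecture (easy inclusion, separable/inseparable splitting, Galois closure, transitivity of the equality under composition), but it differs in two substantive ways. For the purely inseparable case you bypass the paper's entire Section \ref{foli}: instead of Ekedahl's correspondence between $p$-closed foliations and height-one morphisms, the factorisation into height-one pieces (Proposition \ref{factorisation}), and the Frobenius sandwich of Lemma \ref{insapbase}, you raise a section $t \in H^0\bigl(X, \sO_X(f^*(mD))\bigr)$ to the power $p^e = [K(X):K(Y)]$, note that $t^{p^e} \in K(Y)$ (the degree of each minimal polynomial divides $p^e$), and use normality of $Y$ together with descent of effectivity under finite surjective pullback to see that $t^{p^e}$ is a genuine section of $\sO_Y(p^e m D)$ nonvanishing at $f(x)$; since $f$ is a homeomorphism this closes the inclusion $f^{-1}\bB(D) \subseteq \bB(f^*D)$. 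This is more elementary than the paper's route and equally valid. For the Galois case, however, your norm $\prod_g g^*\sigma$ forces you to produce a section nonvanishing on the \emph{whole} fibre, hence the avoidance argument requiring $k$ infinite, hence the preliminary base change to $\bar{k}$. The paper's generalised Reynolds operators (Construction \ref{symmetricsections}) do better: by Lemma \ref{pointnonvanishing}, one of the elementary symmetric expressions $R_i(\sigma)$ in the translates $g^*\sigma$ is already nonzero at the single point $x$, so no moving step and no base change are needed, and the argument works over any field. Your base change is in principle legitimate (flat base change for $h^0$, perfectness of $k$ for reducedness, surjectivity of $X_{\bar k} \to X$ to descend an equality of closed sets), but it has an unaddressed side effect: $X_{\bar k}$ and $Y_{\bar k}$ need not be irreducible (this happens whenever $H^0(X,\sO_X) \neq k$), so the statement you then prove concerns finite surjective morphisms between disjoint unions of normal varieties; this is repairable componentwise, but it must be said.

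The genuine gap is in part \ref{mainthm2:2}. Your deduction rests on (a) an ``$\bR$-divisor analogue of \ref{mainthm2:1}, proved by the identical argument'', and (b) the characterisation $\bB_+(\Delta) = \bB(\Delta - \varepsilon A)$ for all small $\varepsilon>0$. Claim (a) does not follow by the identical argument: effective $\bR$-divisors $\bR$-linearly equivalent to $f^*\Delta$ do not form a linear series. The natural norm of an effective $E \sim_{\bR} f^*\Delta$ is $\sum_g g^*E$, whose support is $\bigcup_g g^{-1}(\supp E)$, so once again you need a single $E$ avoiding the entire fibre; but the vector-space avoidance trick has no analogue here --- a combination $\sum_i \lambda_i E_i$ of effective divisors, each avoiding one point of the fibre, has support $\bigcup_i \supp E_i$ and in general avoids no point of the fibre at all. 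Claim (b) is also not free in this generality: the paper's $\bB_+$ is Birkar's $\bigcap_m \bB\bigl(\langle m\Delta\rangle - A\bigr)$, and identifying this with $\bB(\Delta - \varepsilon A)$, where $\bB$ of an $\bR$-divisor is defined via effective $\bR$-divisors, is precisely the kind of statement whose validity over an arbitrary (perfect, possibly finite) field is at issue. Both problems are avoidable, and this is exactly what the paper's Lemma \ref{augmentedred} does: choose the representation $\Delta \sim_{\bR} \sum t_i A_i$ with the $A_i$ \emph{and} $f^*A_i$ very ample, so that $f^*\langle m\Delta\rangle = \langle m f^*\Delta\rangle$; then part \ref{mainthm2:2} follows formally from part \ref{mainthm2:1} applied to the honest Cartier divisors $\langle m\Delta\rangle - A$, with no $\bR$-divisor stable base loci ever appearing. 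You should replace your deduction of \ref{mainthm2:2} by this argument, or else supply proofs of (a) and (b).
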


In Section \ref{prelim}, we collect the definitions of $\bQ$-divisors, $\bR$-divisors,
the stable and augmented base locus.
In Section \ref{symsec}, we prove Theorem \ref{mainthm2} in the separable case by
passing to the Galois closure. There, we use generalised Reynolds operators to produce
Galois-invariant sections to study the stable base locus.
In Section \ref{foli}, we use foliations to tackle the inseparable case.

 \subsection{Acknowledgements}

 The author would like to thank \sloppy \emph{Johan Commelin, Fabio Bernasconi, Andreas Demleitner, Luca Di Cerbo,
 Erwan Rousseau and Roberto Svaldi} for taking their time to read earlier drafts of
 this paper and giving the author useful comments and insights.

 Special thanks to \emph{Calum Spicer} for the insight into tackling the inseparable case; and
 to the author's supervisor \emph{Stefan Kebekus} for making this article even remotely readable!

\section{Preliminaries} \label{prelim}
This section contains the necessary background on the stable and augmented base locus and foliations.

\begin{convention}
Throughout we work with a perfect field $k$.
A \emph{$k$-variety} is an irreducible, separated and integral scheme of finite type over $k$.
We will, in Section \ref{foli}, come across morphisms between $k$-varieties that are not
morphism over $k$. To emphasis this, we will say \emph{$k$-morphism} to mean a morphism
between $k$-varieties that is a morphism of schemes over the base field $k$, a
\emph{morphism} is a morphism of schemes that is not necessarily over $k$.
\end{convention}
\begin{rem}
  The perfect assumption on $k$ is crucial for Section \ref{foli}.
\end{rem}

\subsection{Stable and augmented base locus}

We will follow \cite{bir13} for the definition of augmented base
locus for $\bR$-divisors on a projective scheme. We will then prove a couple of small
but key lemmas that will be used throughout the later sections.

\begin{defn}[{$\bQ$-divisors, $\bR$-divisors and $\bR$-linear equivalence}]
  Let $Y$ be a $k$-variety and denote the group of Cartier divisors on $Y$ as $\Div(Y)$.
  \begin{enumerate}
    \item A $\bQ$-\emph{divisor} on $Y$ is an element in $\Div(Y) \otimes \bQ$.
    \item An $\bR$-\emph{divisor} on $Y$ is an element in $\Div(Y) \otimes \bR$.
    \item Two $\bR$-divisors $\Delta_1$ and $\Delta_2$ are $\bR${\emph{-linearly equivalent}}
    if $\Delta_1 - \Delta_2 = \sum a_i L_i$ where $a_i \in \bR$ and
    $L_i$ are Cartier divisors linearly equivalent to 0. We denote $\bR$-linear
    equivalence as $\Delta_1 \sim_{\bR} \Delta_2$.
  \end{enumerate}
\end{defn}

\begin{defn}[Stable base locus]\label{stab}
 Let $Y$ be a $k$-variety.
 \begin{enumerate}
   \item \label{stab:1} For a Cartier divisor $L$, the stable base locus is defined as
   \[
    \bB(L) := \Bigg( \bigcap_{m \in \bN} \Bs(mL)   \Bigg)_{\red},
   \]
   where $\Bs(mL)$ is the base locus of the linear system $|mL|$.
   \item \label{stab:2} For a $\bQ$-divisor $D$ and a positive integer $b$ such that
   $bD$ is Cartier, the stable locus is defined as
   \[
    \bB^b(D):= \bB(bD).
   \]
 \end{enumerate}
\end{defn}

  \begin{rem}
    In the setting of Definition \ref{stab:2}, the stable base locus for $D$ does
    not depend on the choice of the integer $b$. This follows from the
    observation, for any Cartier divisor $L$ and any positive integer $m$,
    that $\Bs(L) \supseteq \Bs(mL)$. This implies that $\bB(L) \supseteq \bB(mL)$ and
    by Definition \ref{stab}, we find $\bB(L) \subseteq \bB(mL)$. Hence we will simply denote
    the stable base locus as $\bB(D):= \bB^b(D)$.
  \end{rem}

We will use the definition of the augmented base locus for $\bR$-divisors from
\cite[Definition 1.2]{bir13}.

\begin{defn}[Augmented base locus]
  Let $\Delta$ be an $\bR$-divisor on a projective $k$-variety $Y$ and $A$ an ample Cartier divisor on $Y$.
  Choose a representation $\Delta \sim_{\bR} \sum t_i A_i$ where
  $t_i \in \bR $ and $A_i$ are very ample Cartier divisors. Then, the \emph{augmented base locus} is
  \[
    \bB^{A, \sum t_i A_i}_+(\Delta) := \Bigg( \bigcap_{m \in \bN} \bB\bigl (\langle m\Delta \rangle - A \bigr ) \Bigg)_{\red}~,
  \]
  where $\langle m\Delta \rangle = \sum \lfloor mt_i \rfloor A_i$.
\end{defn}

\begin{rem}
  The augmented base locus does not depend on the choice of an ample Cartier divisor $A$,
  or representation $\Delta \sim_{\bR} \sum t_i A_i$ \cite[Lemma 3.1]{bir13}. For this
  reason we drop the superfluous superscripts to simply denote the augmented base locus
  as
  \[
    \bB_+(\Delta) := \bB^{A, \sum t_i A_i }_+(\Delta).
  \]
  By \cite[Lemma 3.1]{bir13} this definition is consistent with
  the definitions found in \cite{Laz04ii, ELMNP6, ELMNP09}.
\end{rem}

\begin{notation}[Pullback of the stable and augmented base locus]
  In the setting of Definition \ref{stab}, let $g: Z \lra Y$ be any surjective finite morphism
  that is not necessarily a $k$-morphism. We abuse notation by denoting $g^{-1}\bB(D)$
  as the reduced scheme associated to the scheme theoretic pullback of $\bB(D)$. We
  do the same with the augmented base locus.
\end{notation}

\begin{rem} The following Lemma \ref{augmentedred} will allow us to reduce statement \ref{mainthm2:2}
  to statement \ref{mainthm2:1} of Theorem \ref{mainthm2}.
\end{rem}

\begin{lem}\label{augmentedred}
  Let $X$ and $Y$ be projective $k$-varieties and $f:X \lra Y$ be a finite morphism\footnotemark{}.
  Suppose that for all Cartier divisors $L$ we have $\bB(f^*L) = f^{-1}\bB(L)$.
  Then for all $\bR$-divisors $\Delta$, we have $\bB_{+}(f^*\Delta) = f^{-1}\bB_{+}(\Delta)$.
\end{lem}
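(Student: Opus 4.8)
The plan is to reduce the statement for $\bR$-divisors to the hypothesis on Cartier divisors, working term by term in the definition of the augmented base locus. Fix an $\bR$-divisor $\Delta$ on $Y$, choose a representation $\Delta \sim_{\bR} \sum t_i A_i$ with the $A_i$ very ample and fix an ample Cartier divisor $A$, so that by definition $\bB_+(\Delta) = \bigl( \bigcap_{m \in \bN} \bB(\langle m\Delta\rangle - A) \bigr)_{\red}$ with $\langle m\Delta\rangle = \sum \lfloor m t_i\rfloor A_i$. Since $f$ is continuous and the set-theoretic preimage commutes with arbitrary intersections, and since every scheme occurring here is taken reduced, I would first obtain the equality of reduced schemes $f^{-1}\bB_+(\Delta) = \bigcap_{m \in \bN} f^{-1}\bB(\langle m\Delta\rangle - A)$. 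Applying the hypothesis to each of the Cartier divisors $\langle m\Delta\rangle - A$ then gives $f^{-1}\bB_+(\Delta) = \bigcap_{m \in \bN} \bB\bigl( f^*(\langle m\Delta\rangle - A) \bigr)$.

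The next step is to recognise the right-hand side as an augmented base locus on $X$. Because $f^*$ is additive and preserves $\bR$-linear equivalence, $f^*\Delta \sim_{\bR} \sum t_i f^*A_i$ is a representation of $f^*\Delta$, and the rounding operation is compatible with pullback: $f^*\langle m\Delta\rangle = \sum \lfloor m t_i\rfloor f^*A_i$. As $f$ is finite, each $f^*A_i$ and $f^*A$ is ample. Hence $f^*(\langle m\Delta\rangle - A) = \sum \lfloor m t_i\rfloor f^*A_i - f^*A$, and the intersection $\bigcap_{m \in \bN} \bB\bigl( f^*(\langle m\Delta\rangle - A) \bigr)$ is precisely the augmented base locus $\bB_+(f^*\Delta)$ computed with respect to the pulled-back representation $\sum t_i f^*A_i$ and the ample divisor $f^*A$. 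This would complete the identification $f^{-1}\bB_+(\Delta) = \bB_+(f^*\Delta)$.

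I expect the only genuine obstacle to lie in the very-ampleness convention built into the definition of $\bB_+$: a finite morphism carries a very ample divisor only to an ample divisor, so the pulled-back representation $\sum t_i f^*A_i$ need not consist of very ample divisors. To bridge this I would invoke the independence of $\bB_+$ from the chosen representation and ample divisor; I expect the comparison argument of \cite[Lemma 3.1]{bir13} to go through with merely ample representing divisors, as only ampleness enters its estimates. Alternatively, staying strictly within the stated definition, one may choose a positive integer $N$ for which each $N f^*A_i$ is very ample, use the genuine representation $\sum (t_i/N)(N f^*A_i)$ of $f^*\Delta$, and verify that passing to the intersection over all $m \in \bN$ leaves $\bB_+(f^*\Delta)$ unchanged, which is again an instance of that independence result.
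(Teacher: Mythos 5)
Your chain of equalities is the same computation the paper uses, and its first three steps (commuting $f^{-1}$ with the intersection, applying the hypothesis to each Cartier divisor $\langle m\Delta\rangle - A$, and the identity $f^*\langle m\Delta\rangle = \sum\lfloor mt_i\rfloor f^*A_i$) are all fine. The gap sits exactly at the point you flag as the ``only genuine obstacle'', and neither of your two patches closes it. The first patch --- that the comparison argument of \cite[Lemma~3.1]{bir13} goes through with merely ample representing divisors --- is an unverified claim about the internals of a cited proof, and it is not innocuous: what makes such comparisons of base loci work is that non-negative integral combinations of \emph{very ample} (more generally, globally generated) divisors can be absorbed into base loci via $\Bs(D+H)\subseteq\Bs(D)$ for $H$ base-point free, and this absorption fails for divisors that are merely ample. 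The second patch misapplies the independence result: after rescaling on $X$, what has to be shown is
$\bigcap_{m}\bB\bigl(\textstyle\sum\lfloor mt_i\rfloor f^*A_i - f^*A\bigr) = \bigcap_{m}\bB\bigl(\textstyle\sum\lfloor mt_i/N\rfloor Nf^*A_i - f^*A\bigr)$,
and the left-hand side is \emph{not} an instance of the definition of an augmented base locus at all, since its representing divisors $f^*A_i$ are not very ample. So \cite[Lemma~3.1]{bir13}, which only compares expressions built from very ample representations, says nothing about it; you would be left having to prove a strengthened independence statement yourself, which is the same difficulty as in your first patch.

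The repair is a one-line reordering of your second patch, and it is precisely what the paper does: perform the rescaling on $Y$ \emph{before} pulling back. Since each $f^*A_i$ is ample (pullback of ample under a finite morphism is ample), choose $N\gg 0$ so that every $Nf^*A_i$ is very ample, and replace the representation $\Delta\sim_{\bR}\sum t_i A_i$ by $\Delta\sim_{\bR}\sum (t_i/N)(NA_i)$. The divisors $NA_i$ are still very ample on $Y$, so this change of representation is a legitimate application of \cite[Lemma~3.1]{bir13} on $Y$ and leaves $\bB_+(\Delta)$ unchanged. With this choice made at the outset, the pulled-back representation consists of very ample divisors, every term in your computation is a genuine instance of the definition, and the final identification with $\bB_+(f^*\Delta)$ is a legitimate application of the independence lemma on $X$. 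In short: the paper builds the very-ampleness of the $f^*A_i$ into the initial choice of representation, whereas you try to restore it after pulling back, where the tool you need no longer applies.
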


\begin{proof}
  To set up the proof, let $A$ be an ample Cartier divisor on $Y$.
  Let $\Delta$ be an $\bR$-divisor and choose a representation
  $\Delta \sim_{\bR} \sum t_i A_i$ where $t_i \in \bR $ and the $A_i$ are very ample
  Cartier divisors such that $f^*A_i$ are very ample. This is possible, as the pullback
  of an ample divisor under a finite morphism is ample \cite[Proposition 1.2.13]{Laz04i}.
  Observe then that $f^*\Delta \sim_{\bR} \sum t_i f^*A_i$. By this choice of representation,
  we find that for all $m$, we have $f^* \langle m\Delta \rangle  = \langle m f^*\Delta \rangle $.

  Finally we compute that
  \begin{align*}
    f^{-1}\bB_+(\Delta) &= \bigcap_{m \in \bN} f^{-1}\bB\bigl (\langle m\Delta \rangle - A \bigr )  \\
       &= \bigcap_{m \in \bN} \bB\bigl (f^*\langle m\Delta \rangle - f^*A \bigr ) \ \ \ \ \textmd{by hypothesis}\\
                   &= \bigcap_{m \in \bN} \bB\bigl (\langle mf^*\Delta \rangle - f^*A \bigr )
                   = \bB_+(f^*\Delta). \qedhere
  \end{align*}
\end{proof}
The following is a trivial statement. As we will use it several times, we
formulate it as a lemma.

\begin{lem}\label{trivial}
  Let $X$ and $Y$ be normal $k$-varieties and $D$ a $\bQ$-divisor on $Y$.
  For any finite surjective morphism\footnotemark[\value{footnote}] $f:X \lra Y$ we find that $\bB(f^*D) \subseteq f^{-1}\bB(D)$. \qed \qedhere
\end{lem}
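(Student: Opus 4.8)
The plan is to reduce immediately to the case of a Cartier divisor and then to compare base loci level by level, using nothing more than the functoriality of pullback on global sections. First I would fix a positive integer $b$ such that $bD$ is Cartier; since the pullback of a Cartier divisor is again Cartier, $f^*(bD) = b\,f^*D$ is Cartier as well, so the same $b$ computes both $\bB(D) = \bB(bD)$ and $\bB(f^*D) = \bB(b\,f^*D)$. Writing $L := bD$, it therefore suffices to prove that for a Cartier divisor $L$ on $Y$ one has the containment $\bB(f^*L) \subseteq f^{-1}\bB(L)$.

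Next I would work at the finite level. Fix $m \in \bN$. Pulling back sections gives a $k$-linear map $f^*\colon H^0\bigl(Y, \sO_Y(mL)\bigr) \to H^0\bigl(X, \sO_X(m\,f^*L)\bigr)$, whose image spans a sub-linear-system of $|m\,f^*L|$. Concretely, let $\sI_m \subseteq \sO_Y$ denote the base ideal of $|mL|$, that is, the image of the evaluation map $H^0(Y, \sO_Y(mL)) \otimes \sO_Y(-mL) \to \sO_Y$. Then the base ideal of this sub-system is exactly the inverse image ideal $f^{-1}\sI_m \cdot \sO_X$, whose vanishing cuts out the scheme-theoretic preimage $f^{-1}\Bs(mL)$. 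Since this sub-system sits inside the full system $|m\,f^*L|$, the base ideal of $|m\,f^*L|$ contains $f^{-1}\sI_m \cdot \sO_X$, and passing to vanishing loci (a larger ideal cutting out a smaller set) yields
\[
  \Bs(m\,f^*L) \subseteq f^{-1}\Bs(mL)
\]
as closed subsets.

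Finally, I would intersect over all $m$ and pass to reduced structures. Set-theoretically,
\[
  \bB(f^*L) = \bigcap_{m \in \bN} \Bs(m\,f^*L) \subseteq \bigcap_{m \in \bN} f^{-1}\Bs(mL) = f^{-1}\Bigl( \bigcap_{m \in \bN} \Bs(mL) \Bigr) = f^{-1}\bB(L),
\]
where the middle equality only uses that preimage commutes with intersection. As both sides are taken with their reduced structures, the containment of underlying closed sets gives the claimed containment of reduced schemes.

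Since the only inputs are the functoriality of pullback on sections and the stability of the Cartier condition under pullback — and neither normality nor surjectivity is actually needed for this direction — there is no genuine obstacle, in line with the lemma being labelled trivial. The single point that requires care is the identification, at each level $m$, of the base ideal of the pulled-back linear system with the inverse image of $\sI_m$; this is precisely where the one-sided containment $\Bs(m\,f^*L) \subseteq f^{-1}\Bs(mL)$, rather than equality, originates, reflecting that $|m\,f^*L|$ may acquire sections not pulled back from $Y$. Recovering the reverse inclusion is exactly the content of the main theorem and is not available by these formal means.
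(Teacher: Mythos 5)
Your proof is correct and fills in exactly the argument the paper treats as implicit: the lemma is stated without proof precisely because the containment amounts to your level-by-level observation that a pulled-back section $f^*s$ vanishes at $x$ if and only if $s$ vanishes at $f(x)$, giving $\Bs(m\,f^*L) \subseteq f^{-1}\Bs(mL)$ for each $m$, followed by intersecting over $m$. One small nitpick: since the footnote allows $f$ to not be a $k$-morphism (e.g.\ a Frobenius-type morphism), the map $f^*$ on global sections is in general only additive/semilinear rather than $k$-linear, but your argument never actually uses linearity --- only the identification of the base ideal of the pulled-back system with $f^{-1}\sI_m \cdot \sO_X$ via right-exactness of pullback --- so nothing breaks.
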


\footnotetext{not necessarily a $k$-morphism}

\begin{cor}\label{bigsqueeze}
  Let $X, Y$ and $Z$ be normal $k$-varieties, let $Z \xrightarrow{g} X \xrightarrow{f} Y$
  be a composition of finite surjective morphisms\footnotemark[\value{footnote}], and let $D$ be a $\bQ$-divisor on $Y$.
  If we have $\bB \bigl ( (f \circ g) ^*L \bigr ) = (f \circ g)^{-1}\bB(L)$ then $\bB(f^*L) = f^{-1}\bB(L)$ holds.
\end{cor}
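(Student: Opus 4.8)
The plan is to deduce the non-trivial inclusion from the hypothesis on the composite $f\circ g$, using Lemma \ref{trivial} for the lower map $g$ and then cancelling the preimage operation by surjectivity. Throughout I treat the divisor called $L$ in the statement as the $\bQ$-divisor $D$ (they are the same; the two names appear to be a slip), and I compare all loci as reduced schemes, i.e.\ as sets of points.

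First I would record that the inclusion $\bB(f^*D) \subseteq f^{-1}\bB(D)$ is automatic, being exactly Lemma \ref{trivial} applied to $f$. Hence the corollary reduces to proving the reverse inclusion $f^{-1}\bB(D) \subseteq \bB(f^*D)$. To this end I would invoke Lemma \ref{trivial} a second time, now for the finite surjective morphism $g\colon Z \to X$ and the $\bQ$-divisor $f^*D$ on $X$, obtaining
\[
  \bB\bigl(g^*(f^*D)\bigr) \subseteq g^{-1}\bB(f^*D).
\]
Next I would rewrite the left-hand side: functoriality of pullback gives $g^*(f^*D) = (f\circ g)^*D$, and the hypothesis of the corollary gives $\bB\bigl((f\circ g)^*D\bigr) = (f\circ g)^{-1}\bB(D) = g^{-1}\bigl(f^{-1}\bB(D)\bigr)$. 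Substituting yields
\[
  g^{-1}\bigl(f^{-1}\bB(D)\bigr) \subseteq g^{-1}\bigl(\bB(f^*D)\bigr).
\]

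The final step is to cancel $g^{-1}$ from both sides, and this is the only point that needs a word of justification. Since $g$ is surjective, for closed subsets $A, B \subseteq X$ the inclusion $g^{-1}(A) \subseteq g^{-1}(B)$ forces $A \subseteq B$: given $x \in A$, surjectivity supplies $z \in Z$ with $g(z) = x$, so $z \in g^{-1}(A) \subseteq g^{-1}(B)$ and thus $x = g(z) \in B$. Applying this with $A = f^{-1}\bB(D)$ and $B = \bB(f^*D)$ gives the desired inclusion $f^{-1}\bB(D) \subseteq \bB(f^*D)$, which together with the automatic one yields equality. I do not expect a genuine obstacle here; the argument is a short formal manipulation, and the only thing to keep in view is that the objects are being compared as reduced subschemes, so that the set-theoretic cancellation by surjectivity of $g$ is legitimate.
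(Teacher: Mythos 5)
Your proposal is correct and takes essentially the same route as the paper's proof: both apply Lemma \ref{trivial} to $g$ with the divisor $f^*D$, use the hypothesis to identify the two ends of the resulting chain $\bB\bigl((f\circ g)^*D\bigr) \subseteq g^{-1}\bB(f^*D) \subseteq g^{-1}\bigl(f^{-1}\bB(D)\bigr)$, and then cancel $g^{-1}$ via surjectivity of $g$. Your write-up just makes explicit two points the paper leaves tacit, namely the set-theoretic cancellation argument and the $L$-versus-$D$ notational slip in the statement.
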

\begin{proof}
  There is a chain of inclusions $\bB \bigl ( (f \circ g)^*D) \subseteq g^{-1}\bB(f^* D)
  \subseteq g^{-1} \bigl (f^{-1}\bB(L) \bigr )$.
  By hypothesis $\bB \bigl ((f \circ g)^*D \bigr ) = (f \circ g)^{-1} \bB(D)$, hence the chain of inclusions are
  indeed equalities. The result follows from applying $g$ to the second inclusion.
\end{proof}

\subsection{Foliation}
We recall the definition of a foliation.

\begin{defn}[Foliation]
  Let $X$ be normal $k$-variety and denote the sheaf of Kähler differentials on $X$ by
  $\Omega^1_X$. A \emph{foliation} on $X$ is a saturated subsheaf of the tangent sheaf
  $\sT_X := \bigl ( \Omega^1_X \bigr )^\vee$ that is closed under the Lie bracket.
\end{defn}

\begin{defn}[{$p$-closed Foliation}]
  Let $\sF$ be a foliation on a normal $k$-variety $X$. Suppose that the characteristic
  of the field $k$ is $p>0$. We say that $\sF$ is \emph{$p$-closed} if
  for all $U \subseteq X$ open and $\forall \partial \in \sF(U)$,
  the composition of $\partial$ with itself $p$-times is in $\sF(U)$.
\end{defn}

\section{The case of separable morphisms}\label{symsec}

  This section is devoted to proving the following result:

  \begin{thm}\label{thm2}
    Let $f: X \lra Y$ be a finite, surjective and separable $k$-morphism of normal projective
    $k$-varieties. Then for every $\bQ$-divisor $D$ on $Y$, we have $\bB(f^*D) = f^{-1}\bB(D)$.
  \end{thm}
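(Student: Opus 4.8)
The plan is to establish the reverse inclusion $f^{-1}\bB(D) \subseteq \bB(f^*D)$, since Lemma \ref{trivial} already supplies $\bB(f^*D) \subseteq f^{-1}\bB(D)$. First I would reduce to the Galois case. Let $k(Y) \subseteq k(X)$ be the corresponding separable extension of function fields, let $M$ be its Galois closure, and let $Z$ be the normalisation of $X$ in $M$. This yields finite surjective $k$-morphisms $Z \to X \to Y$ with $Z$ again a normal projective $k$-variety and $Z \to Y$ Galois and separable. By Corollary \ref{bigsqueeze} it then suffices to prove the equality for the composite $Z \to Y$, so I may assume that $f$ is Galois with Galois group $G$ and that $Y = X/G$.

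Next I would record the descent of invariant sections. Since $D$ is pulled back from $Y$, the divisor $f^*D$ is $G$-invariant; the projection formula gives $f_*\sO_X(mf^*D) \cong \sO_Y(mD) \otimes f_*\sO_X$, and $(f_*\sO_X)^G = \sO_Y$ for a Galois morphism between normal varieties. Taking $G$-invariant global sections therefore identifies $H^0(X, mf^*D)^G$ with $H^0(Y, mD)$: a $G$-invariant section of $mf^*D$ is exactly the pullback $f^*\tau$ of a section $\tau$ of $mD$, and $f^*\tau$ is non-vanishing at a point $x$ if and only if $\tau$ is non-vanishing at $y = f(x)$. In particular, producing a single $G$-invariant section of some multiple of $f^*D$ that does not vanish at $x$ immediately yields $y \notin \bB(D)$.

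Now fix $x \notin \bB(f^*D)$ and set $y = f(x)$. Because $f^*D$ is $G$-invariant, the locus $\bB(f^*D)$ is $G$-stable, so the entire fibre $F := f^{-1}(y)$, which is the $G$-orbit of $x$, avoids $\bB(f^*D)$. As $F$ is finite and $\Bs(m'L) \subseteq \Bs(mL)$ whenever $m \mid m'$, I can choose one $m$ with $mD$ Cartier so that $F \cap \Bs(mf^*D) = \emptyset$; equivalently, $\sO_X(mf^*D)$ is globally generated at every point of $F$.

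The heart of the argument, and the step I expect to be the main obstacle, is to upgrade these (non-invariant) sections to a $G$-invariant one that still does not vanish at $x$. In characteristic $0$, or whenever one can find a single section $s \in H^0(X, mf^*D)$ non-vanishing on all of $F$, the Galois norm $\prod_{g \in G} g^*s \in H^0(X, |G|\,mf^*D)$ is $G$-invariant and non-vanishing at $x$, and the second step finishes the job. The real difficulty occurs in characteristic $p$ dividing $|G|$, where the averaging Reynolds operator is unavailable, and over small fields, where a general linear combination of sections need not avoid the finitely many hyperplanes of sections vanishing at the points of $F$. This is precisely where the generalised Reynolds operator is needed: finite groups are geometrically reductive over any field, so from a section non-vanishing at $x$ one extracts a $G$-invariant homogeneous form in the sections, that is, a $G$-invariant section of some power $\sO_X(dm f^*D)$, whose value at $x$ is non-zero. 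Descending this invariant section as above produces $\tau \in H^0(Y, dmD)$ with $\tau(y) \neq 0$, whence $y \notin \Bs(dmD) \supseteq \bB(D)$. This gives $f^{-1}\bB(D) \subseteq \bB(f^*D)$ and completes the proof.
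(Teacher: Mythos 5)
Your overall strategy --- reduce to the Galois case via the normalisation of $X$ in the Galois closure together with Corollary \ref{bigsqueeze}, descend $G$-invariant sections through $(f_*\sO_X)^G = \sO_Y$, and then manufacture a $G$-invariant section of some multiple of $f^*D$ not vanishing at $x$ --- is exactly the paper's strategy, and everything up to the final step is correct. The gap is the final step, which is the actual content of the theorem. You justify the existence of the ``generalised Reynolds operator'' by citing geometric reductivity of finite groups, but that theorem does not apply in the form you invoke it: its hypothesis is a nonzero \emph{$G$-invariant} vector $v$ in a representation, and its conclusion is an invariant homogeneous form not vanishing \emph{at $v$}. In your situation nothing in sight is invariant --- neither $\sigma$ nor the evaluation functional at $x$, since $x$ is not a fixed point of $G$ --- so there is no invariant vector to feed into the theorem. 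The objection is not pedantic: for a finite group, geometric reductivity is itself proved by the norm trick (given invariant $v$ and a linear form $\lambda$ with $\lambda(v)\neq 0$, the invariant form $\prod_{g\in G} g\cdot\lambda$ takes the value $\lambda(v)^{|G|}\neq 0$ at $v$, with no possible cancellation \emph{because $v$ is fixed}), and the failure of exactly this trick at the non-fixed point $x$ --- the norm $\prod_g g^*\sigma$ may vanish at $x$ because $\sigma$ vanishes elsewhere on the orbit --- is the difficulty you yourself identified. The other standard way to deploy geometric reductivity, namely restricting the section ring to the $G$-stable orbit $Gx$, choosing an invariant nowhere-vanishing section there, and lifting a power of it, requires the restriction map $\bigoplus_j H^0\bigl(X, \sO_X(jmf^*D)\bigr) \lra \bigoplus_j H^0\bigl(Gx, \sO_X(jmf^*D)|_{Gx}\bigr)$ to be eventually surjective, which one gets from ampleness; here $D$ is an arbitrary $\bQ$-divisor, so that tool is unavailable as well.

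What fills the gap in the paper is elementary and characteristic-free: from $\sigma$ with $\sigma(x)\neq 0$ one forms \emph{all} the elementary symmetric functions $R_i(\sigma)$, $1\le i\le N=|G|$, of the translates $\{g^*\sigma\}_{g\in G}$ (Construction \ref{symmetricsections}); each $R_i(\sigma)$ is $G$-invariant, and the recursions $R_i(\sigma)=\sigma\otimes\tau_{i-1}(\sigma)+\tau_i(\sigma)$ of Lemma \ref{relations} show that if $R_1(\sigma)(x)=\cdots=R_{N-1}(\sigma)(x)=0$ then $R_N(\sigma)(x)=(-1)^{N-1}\sigma^{\otimes N}(x)\neq 0$ (Lemma \ref{pointnonvanishing}); hence some $R_i(\sigma)$ is an invariant section of $\sO_X(im\cdot f^*D)$ not vanishing at $x$, and it descends exactly as in your second step. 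If you want to keep the GIT flavour of your argument, it can be repaired: map $X\setminus \Bs(mf^*D)$ equivariantly into $\bP\bigl(H^0(X,\sO_X(mf^*D))^\vee\bigr)$ by the linear system, apply the invariant-lifting lemma there (where the relevant bundle \emph{is} ample) to the image of the orbit, and pull the resulting invariant form back to $X$. But either this detour or the symmetric-function construction must actually be supplied: as written, the sentence ``one extracts a $G$-invariant homogeneous form $\ldots$ whose value at $x$ is non-zero'' asserts the theorem's key lemma rather than proving it.
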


  \subsection{Symmetric Sections}

  To prove Theorem \ref{thm2} we will first study the partial case when $f: X \lra Y$ is a finite \emph{Galois}
  morphism, that is when $f$ is isomorphic to a quotient of $X$ by a finite group
  $G$. In this case we make use of the group action $G$ to produce a plethora of
  sections by generalising the classical Reynolds operator.

  Suppose that $L$ is a Cartier divisor on $Y$ and $\sigma \in H^0 \bigl (X, \sO_X(f^*L) \bigr )$, we
  can apply the classical Reynolds operator on $\sigma$ to obtain a section,
  which we denote by, $R_1(\sigma) \in H^0 \bigl (Y, \sO_Y(L) \bigr )$. Even if $\sigma(x) \neq 0$
  for some $x \in X$, there is no guarantee that $R_1\bigl (\sigma \bigr)(f(x)) \neq 0$.
  Hence it is hard to control the stable base locus with the Reynolds operator. The following
  Construction \ref{symmetricsections} remedies this issue.

  \begin{construction}\label{symmetricsections}
     Let $X$ be a projective $k$-variety and $G < \Aut_k(X)$ be a non trivial finite group.
     Denote by $f: X \lra Y$ the quotient of $X$ by $G$. Take a Cartier divisor $L$ on $Y$.	Write $G$ as
	   \[
		   G = \{g_1, g_2, \ldots, g_N \},
	   \]
     such that $g_1$ is the identity element.
     For each $1 \leq i \leq N$ there is a linear map
  \begin{align*}
		R_i : H^0 \bigl (X, \sO_X(f^*L) \bigr ) & \lra H^0 \bigl ( X, \sO_X(i \cdot f^*L) \bigr ) = H^0 \bigl ( X, \Sym^i\sO_X( f^*L) \bigr ) \\
						\sigma & \longmapsto
						 \sum_{1 \leq j_1 < j_2 < \cdots < j_i \leq N} g_{j_1}^*\sigma \otimes g_{j_2}^*\sigma \otimes
						 \cdots \otimes g_{j_i}^*\sigma.
	\end{align*}
	It should be clear that $R_i(\sigma)$ are $G$-invariant by construction. It will
	also be useful to define the following linear maps for $i = 1, \ldots, N-1$:

	\begin{align*}
		\tau_i : H^0 \bigl (X, \sO_X(f^*L) \bigr ) & \lra H^0 \bigl ( X, \sO_X(i \cdot f^*L) \bigr ) \\
						\sigma & \longmapsto
						 \sum_{1 < j_1 < j_2 < \cdots < j_i \leq N} g_{j_1}^*\sigma \otimes g_{j_2}^*\sigma \otimes
						 \cdots \otimes g_{j_i}^*\sigma
	\end{align*}
\end{construction}

\begin{rem}
  In Construction \ref{symmetricsections}, the definitions of $R_i(\sigma)$ and $\tau_i(\sigma)$
  do not depend on the choice of ordering of $G$.
\end{rem}

\begin{lem}\label{relations} In the setting of Construction \ref{symmetricsections},
   given a section $\sigma \in H^0 \bigl ( X, \sO_X(f^*L) \bigr )$ there are relations:
   \[
   R_i(\sigma) =
	\begin{cases}
		 \sigma + \tau_1(\sigma) & \textmd{ for $i = 1$}\\
		 \sigma \otimes \tau_{i-1}(\sigma) +  \tau_{i}(\sigma) & \textmd{ for $1 < i < N$}\\
		 \sigma \otimes \tau_{N-1}(\sigma) & \textmd{ for $i = N$}.
	\end{cases}
  \]
\end{lem}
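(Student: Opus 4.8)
The plan is to obtain each of the three identities by partitioning the index set defining $R_i(\sigma)$ according to the value of the smallest index $j_1$. Recall that $g_1$ is the identity of $G$, so that $g_1^*\sigma = \sigma$; this is the single fact about the labelling of $G$ that the argument uses, which is consistent with the preceding remark that $R_i$ and $\tau_i$ do not depend on the chosen ordering.

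Concretely, I would write the defining sum $R_i(\sigma) = \sum_{1 \le j_1 < \cdots < j_i \le N} g_{j_1}^*\sigma \otimes \cdots \otimes g_{j_i}^*\sigma$ as the sum of two pieces: the tuples with $j_1 = 1$ and the tuples with $j_1 > 1$. The piece with $j_1 > 1$ is, by definition, exactly $\tau_i(\sigma)$. For the piece with $j_1 = 1$, the leading factor is $g_{j_1}^*\sigma = g_1^*\sigma = \sigma$, which can be pulled out front; the remaining indices then range over $1 < j_2 < \cdots < j_i \le N$, and after relabelling the summation indices this inner sum is precisely $\tau_{i-1}(\sigma)$. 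Hence for $1 < i < N$ one reads off $R_i(\sigma) = \sigma \otimes \tau_{i-1}(\sigma) + \tau_i(\sigma)$.

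The two extremal cases are degenerations of the same partition. For $i = 1$ there is a single factor, so the $j_1 = 1$ piece is the bare section $\sigma$ (no tensor factor remains) while the $j_1 > 1$ piece is $\tau_1(\sigma)$, giving $R_1(\sigma) = \sigma + \tau_1(\sigma)$. For $i = N$ the piece with $j_1 > 1$ is empty, since one cannot choose $N$ strictly increasing indices in $\{1, \ldots, N\}$ all exceeding $1$; only the piece with $j_1 = 1$ survives, and it factors as $\sigma \otimes \tau_{N-1}(\sigma)$, where $\tau_{N-1}(\sigma) = g_2^*\sigma \otimes \cdots \otimes g_N^*\sigma$ is the single remaining term.

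I do not expect a genuine obstacle here: the statement is a bookkeeping identity, and the only point requiring care is the boundary behaviour at $i = 1$ and $i = N$, where either the pulled-out tensor factor or the $\tau_i$ summand degenerates. Everything else follows immediately from the definitions in Construction \ref{symmetricsections}.
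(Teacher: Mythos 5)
Your proof is correct and follows essentially the same route as the paper's: splitting the defining sum for $R_i(\sigma)$ into the terms with $j_1 = 1$ (which contribute $\sigma \otimes \tau_{i-1}(\sigma)$) and those with $j_1 > 1$ (which give $\tau_i(\sigma)$), with the boundary cases $i=1$ and $i=N$ treated as degenerations. The paper simply declares those two extremal cases ``clear,'' so your explicit handling of them is, if anything, slightly more thorough.
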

\begin{proof}
	The cases $i = 1$ and $i = N$ are clear. For $1<i<N$, we can split $R_i(\sigma)$ as follows
	\begin{align*}
		R_i(\sigma) &= \sum_{1 \leq j_1 < j_2 < \cdots< j_i \leq N} g_{j_1}^*\sigma \otimes g_{j_2}^*\sigma \otimes
		\cdots \otimes g_{j_i}^*\sigma \\
		  &= \sum_{1 = j_1 < j_2 < \cdots< j_i \leq N} g_{j_1}^*\sigma \otimes g_{j_2}^*\sigma \otimes
			\cdots \otimes g_{j_i}^*\sigma \\
			& \ \ \ \ \ \ \ \ \ \ \ \ \ \ \ \ \ \ + \sum_{1 < j_1 < \cdots < j_i \leq N} g_{j_1}^*\sigma \otimes g_{j_2}^*\sigma \otimes
			\cdots \otimes g_{j_i}^*\sigma \\
		 &= \sigma \otimes \tau_{i-1}(\sigma) +  \tau_{i}(\sigma). \qedhere
	\end{align*}
\end{proof}

\begin{lem}\label{pointnonvanishing}
	In the setting of Construction \ref{symmetricsections},
  let $\sigma \in  H^0 \bigl ( X, \sO_X(f^*L)\bigr ) $ and $x \in X$ be a scheme theoretic point such that
  $\sigma(x) \neq 0$. Then, there exists an $1 \leq i \leq N$ such that $R_i(\sigma)(x) \neq 0$.
\end{lem}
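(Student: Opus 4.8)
The plan is to reduce the statement to an elementary identity about symmetric functions evaluated in the fibre of the line bundle at $x$. First I would note that, since $f$ is the quotient by $G$, we have $f\circ g_j=f$ and hence $g_j^*f^*L=f^*L$ for every $j$; thus each $g_j^*\sigma$ is again a global section of $\sO_X(f^*L)$, and all of them take values in one and the same one-dimensional $\kappa(x)$-vector space $\sO_X(f^*L)\otimes\kappa(x)$. Fixing a generator $e$ of this fibre, I would record the scalars $a_j:=(g_j^*\sigma)(x)\in\kappa(x)$, defined by $(g_j^*\sigma)(x)=a_j\,e$. Because $g_1$ is the identity, $a_1=\sigma(x)\neq 0$ by hypothesis.

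Next I would observe that, under the induced generator $e^{\otimes i}$ of $\sO_X(i\cdot f^*L)\otimes\kappa(x)$, the value $R_i(\sigma)(x)$ equals $e_i(a_1,\dots,a_N)\cdot e^{\otimes i}$, where $e_i$ denotes the $i$-th elementary symmetric polynomial. This is immediate from the defining sum for $R_i$: evaluation at $x$ sends each tensor factor $g_{j_k}^*\sigma$ to the scalar $a_{j_k}$, so each summand $g_{j_1}^*\sigma\otimes\cdots\otimes g_{j_i}^*\sigma$ becomes $a_{j_1}\cdots a_{j_i}\,e^{\otimes i}$, and summing over $1\le j_1<\cdots<j_i\le N$ yields $e_i(a_1,\dots,a_N)\,e^{\otimes i}$. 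In particular the vanishing of $R_i(\sigma)(x)$ is equivalent to $e_i(a_1,\dots,a_N)=0$ and is independent of the chosen generator.

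The conclusion then follows by contradiction. Suppose $R_i(\sigma)(x)=0$ for all $1\le i\le N$, so that every elementary symmetric polynomial $e_i(a_1,\dots,a_N)$ vanishes. Then in $\kappa(x)[T]$ one has $\prod_{j=1}^{N}(T-a_j)=T^N-e_1T^{N-1}+\cdots\pm e_N=T^N$, and since $\kappa(x)$ is a field this forces $a_j=0$ for every $j$, contradicting $a_1\neq 0$. Alternatively, and more in keeping with the preceding lemma, I would feed the recursion of Lemma \ref{relations} into the same contradiction: writing $t_i:=\tau_i(\sigma)(x)$, the relations give $t_1=-a_1$ and $t_i=-a_1t_{i-1}$ whenever $R_i(\sigma)(x)=0$, so inductively $t_i=(-a_1)^i$, and then $R_N(\sigma)(x)=a_1t_{N-1}=(-1)^{N-1}a_1^{N}\neq 0$, a contradiction.

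I do not expect a genuine obstacle here: once the passage to fibre values is carried out, the entire content is the standard fact that a tuple all of whose elementary symmetric functions vanish must be identically zero. The only points requiring care are formal bookkeeping ones, namely checking that $g_j^*\sigma$ really lands in $H^0(X,\sO_X(f^*L))$ so that all the $a_j$ live in a single fibre, and that evaluation at $x$ is compatible with the identification $\sO_X(i\cdot f^*L)=\Sym^i\sO_X(f^*L)$; both are straightforward.
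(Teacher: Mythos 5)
Your proposal is correct, and your primary argument takes a genuinely different (if closely related) route from the paper's. The paper argues purely through the recursion of Lemma \ref{relations}: assuming $R_i(\sigma)(x)=0$ for $i=1,\dots,N-1$, it unwinds the relations to get $R_N(\sigma)(x)=(-1)^{N-1}\sigma^{\otimes N}(x)\neq 0$ --- exactly the ``alternative'' argument you sketch at the end (your $t_i=(-a_1)^i$ induction is the paper's proof transcribed into fibre scalars). Your main argument instead trivialises the fibre, identifies $R_i(\sigma)(x)$ with the $i$-th elementary symmetric polynomial $e_i(a_1,\dots,a_N)$ of the values $a_j=(g_j^*\sigma)(x)$, and invokes Vieta: if all $e_i$ vanish then $\prod_j(T-a_j)=T^N$ in $\kappa(x)[T]$, forcing every $a_j=0$ and contradicting $a_1=\sigma(x)\neq 0$. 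This buys two things the paper's proof does not make explicit: it bypasses Lemma \ref{relations} entirely, and it yields the stronger conclusion that simultaneous vanishing of \emph{all} the $R_i(\sigma)(x)$ forces \emph{every} translate $g_j^*\sigma$ to vanish at $x$, not just $\sigma$ itself. It also makes honest the bookkeeping that the paper leaves implicit, namely that $g_j^*f^*L=f^*L$ (since $f\circ g_j=f$), so all the values $a_j$ live in one fibre and the expression $\sigma^{\otimes N}(x)$ has an unambiguous meaning; conversely, the paper's recursion argument is shorter once Lemma \ref{relations} is in hand and needs no choice of generator of the fibre. Both are valid in any characteristic, so there is no gap in either direction.
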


\begin{proof}
	Suppose that $R_i(\sigma)(x) = 0$ for all $i = 1, \ldots, N-1$.
	Then by the relations	of Lemma \ref{relations}, one finds that
	\[
		R_N(\sigma)(x) =   (-1)^{(N-1)} \sigma^{\otimes N}(x).
	\]
	This implies that $R_N(\sigma)(x)$ is non-zero as it is assumed that $\sigma(x) \neq 0$.
	Hence $R_i(\sigma)(x)$ cannot simultaneously vanish for all $1 \leq i \leq N$.
\end{proof}

\subsection{{Proof of Theorem \ref{thm2}}}

  Lemma \ref{bigsqueeze} allows us to pass to the Galois closure and, without loss of generality,
  assume that $f:X \lra Y$ is the quotient of $X$ by a finite group $G$.

  The inclusions $\bB(f^*L) \subseteq f^{-1}\bB(L)$ follows from Lemma \ref{trivial}.

  Conversely, take $x \notin \bB(f^*D)$. This means that there exists an $m \in \bN$
  and a section $\sigma \in H^0 \bigl (X, \sO_X(m \cdot f^*D) \bigr ) $ such that $\sigma(x) \neq 0$.
  To show that $x \notin f^{-1}\bB(D)$, we need to produce a section in
  $H^0 \bigl (Y, \sO_Y( n \cdot D) \bigr )$, for some $n>0$, that does not vanish at $f(x)$. By
  Lemma \ref{pointnonvanishing}, we find that there exists an $i$, such that
  $R_i(\sigma) \in f^*H^0 \bigl (Y, \sO_Y(mi \cdot D) \bigr )$ satisfying $R_i(\sigma)(x) \neq 0$. \qed

  \section{The Case of inseparable morphisms} \label{foli}

  The goal of this section is to remove the separability hypothesis of Theorem~\ref{thm2},
  and hence proving Theorem \ref{mainthm2}.
  In characteristic $0$, every finite morphism is automatically separable. In positive
  characteristic, a finite field extension splits into a separable extension and
  a purely inseparable extension. Hence it enough to study purely inseparable
  morphisms. In this case, the idea is to use Ekedahl's theory of foliations and
  purely inseparable morphisms to factorise a purely inseparable $k$-morphism
  into purely inseparable morphisms of height one \cite{Ek87}. We then study the
  pullback of the stable and augmented base locus via a purely inseparable morphism
  of height one.

  \begin{setting}\label{poscar}
    Assume that the perfect field $k$ has characteristic $p > 0$.
  \end{setting}

  \subsection{Ekedahl's theory of foliations and purely inseparable morphisms}

  The original paper \cite{Ek87} covers the setting of smooth $k$-varieties. However,
  it is crucial to work in the setting of normal $k$-varieties. The excellent \cite{PatWa18}
  gives a nice exposition of this more general setting in \cite[Section 2.4]{PatWa18},
  which we follow closely.

  \begin{defn}[Absolute Frobenius] In Setting \ref{poscar}, let $X$ be a
    normal $k$-variety. The \emph{absolute Frobenius} morphism
    $F_X: X \lra X$ is the endomorphism of schemes induced by the identity on the underlying
    topological space and the morphism between the sheaves of rings, $F^\#_X: \sO_X \lra \sO_X$,
    is given by sending $s \mapsto s^p$.
  \end{defn}

  \begin{rem}
    In general, the absolute Frobenius morphism is not a $k$-morphism. The pushforward
    $\bigl (F_X \bigr )_*\sO_X$ is isomorphic to $\sO_X$ as sheaves. However the $\sO_X$-module
    structure on $\bigl (F_X \bigr )_* \sO_X$ is given by the rule
    \begin{align*}
        \sO_X \times \bigl (F_X \bigr )_*\sO_X &\lra \bigl (F_X \bigr )_* \sO_X \\
           (r,s) \quad \quad \ \  &\longmapsto \ \ r^p \cdot s.
    \end{align*}
    We will label $\bigl (F_X \bigr )_*\sO_X$ with this $\sO_X$-module structure
    by $\sO_{F_X}$.
  \end{rem}

  \begin{defn}[Purely inseparable morphism]
    In Setting \ref{poscar}, a finite surjective morphism\footnotemark{} $g:X \lra Z$ between normal
    $k$-varieties is called \emph{purely inseparable} if the field extension
    $k(Z) \subseteq k(X)$ is a purely inseparable extension.
  \end{defn}

  \begin{defn}[Purely inseparable morphism of height one]In Setting \ref{poscar}, a finite surjective
    morphism\footnotemark[\value{footnote}] $g: X \lra Z$ between normal $k$-varieties is called \emph{purely inseparable of height one} if
    it is purely inseparable and there exists a morphism $h: Z \lra X$ such that $h \circ g = F_X$, where
    $F_X$ is the absolute Frobenius of X.
  \end{defn}

  \footnotetext{not necessarily a $k$-morphisms}

  \subsubsection{Foliations and finite morphisms}

  Here we highlight how to obtain a foliation from a finite morphism and vice-versa.

  \begin{construction}[Quotient by a foliation]\label{foltomor} In Setting \ref{poscar},
    let $X$ be a normal $k$-variety and $\sF$ a foliation on $X$. The functions of
    $X$ that are annihilated by $\sF$ define a sheaf $\sA_{\sF}$. More precisely,
    the assignment sending each open set $U \subseteq X$ to
    \[
    \sA_\sF (U) := \{s \in \sO_X(U) \mid \textmd{$\forall V \subseteq U$ open, $\forall \partial \in \sF(V) : \partial s = 0$} \},
    \]
    defines $\sA_{\sF}$.
    The $\sO_X$-module structure of $\sA_{\sF}$ is defined by the multiplication rule
    \begin{align*}
        \sO_X \times \sA_\sF &\lra  \ \sA_\sF \\
           (r,s) \quad  &\longmapsto r^p \cdot s
    \end{align*}
    There is an inclusion of $\sO_X$-modules $\sA_\sF \hookrightarrow \sO_{F_X}$.
    Taking the relative spectrum of this inclusion we obtain
    \[
      \begin{tikzcd}
        X \arrow[bend left = 15]{rr}{F_X} \ar[d] \ar[r] & \Spec_X \sA_\sF \ar[d] \ar[r] &  X \ar[d] \\
        \Spec k \ar[r, equals] & \Spec k \ar[r, "F_{\Spec k}"] & \Spec k.
      \end{tikzcd}
    \]
    The $k$-morphism $X \lra \Spec_X \sA_{\sF}$ is called the \emph{quotient of $X$ by $\sF$}.
  \end{construction}

  \begin{construction}[Foliation associated to a finite morphism] \label{finfoliation} In Setting \ref{poscar}, let
    $f:X \lra Y$ be a finite surjective $k$-morphism between normal $k$-varieties. The
    sections of $\sT_X$ which annihilate $f^{-1}\sO_Y \subseteq \sO_X$ define a subsheaf
    of $\sT_X$. More precisely, the assignment sending each open set $U \subseteq X$ to
    \[
      \sF_f(U) := \{ \partial \in \sT_X (U) \mid \textmd{$\forall V \subseteq U$ open, $ \forall s \in f^{-1}\sO_Y(V) : \partial s = 0 $} \}
    \]
    defines a subsheaf of $\sO_X$-modules of $\sT_X$.
    \end{construction}

    \begin{lem}
      In Construction \ref{finfoliation}, the sheaf $\sF_f$ is a $p$-closed foliation.
    \end{lem}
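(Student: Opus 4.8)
The plan is to verify the three defining properties in turn, since $\sF_f$ is by Construction \ref{finfoliation} already a subsheaf of $\sT_X$: that it is closed under the Lie bracket, that it is saturated, and that it is $p$-closed. I would begin with the Lie bracket, which is the most formal of the three. Fix an open $U$ and sections $\partial_1,\partial_2\in\sF_f(U)$; for any open $V\subseteq U$ and any $s\in f^{-1}\sO_Y(V)$ we have $\partial_1 s=\partial_2 s=0$, whence $[\partial_1,\partial_2]s=\partial_1(\partial_2 s)-\partial_2(\partial_1 s)=0$. Since the commutator of two derivations is again a derivation, $[\partial_1,\partial_2]$ is a section of $\sT_X$ annihilating $f^{-1}\sO_Y$, so it lies in $\sF_f(U)$.

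Next I would verify that $\sF_f$ is saturated, i.e.\ that the quotient $\sT_X/\sF_f$ is torsion-free; this is where integrality of $X$ enters. Our convention makes every $k$-variety integral, so for each open $U$ the ring $\sO_X(U)$ injects into the function field $k(X)$ and in particular has no zero-divisors. Suppose $\partial\in\sT_X(U)$ and $g\in\sO_X(U)$ is nonzero with $g\partial\in\sF_f(U)$. Then for every local section $s\in f^{-1}\sO_Y$ we have $g\cdot(\partial s)=(g\partial)s=0$, and cancelling the nonzero factor $g$ gives $\partial s=0$. Hence $\partial\in\sF_f(U)$, so $\sT_X/\sF_f$ carries no nonzero torsion and $\sF_f$ is saturated.

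Finally I would treat $p$-closedness, which carries the only genuinely non-formal point. For $\partial\in\sF_f(U)$ and $s\in f^{-1}\sO_Y$ the vanishing $\partial s=0$ propagates to $\partial^{\circ p}s=\partial^{\circ(p-1)}(\partial s)=0$, so the $p$-fold composite annihilates $f^{-1}\sO_Y$. The subtlety --- and the step I expect to be the main obstacle --- is that $\partial^{\circ p}$, a priori merely a differential operator of order at most $p$, is in fact again a derivation and hence a genuine section of $\sT_X$. This is the classical characteristic-$p$ identity of Hochschild and Jacobson, which holds precisely because the intermediate binomial coefficients $\binom{p}{j}$ vanish modulo $p$ for $0<j<p$; granting it, $\partial^{\circ p}\in\sF_f(U)$ and all three properties are established.
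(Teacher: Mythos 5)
Your proof is correct and takes essentially the same route as the paper: the identical formal Lie-bracket computation, and the same use of the integrality of $X$ to cancel the nonzero factor $g$ for saturation (the paper phrases this contrapositively, as a contradiction involving a torsion section of $\factor{\sT_X}{\sF_f}$, but the underlying cancellation argument is the same). If anything you are more careful than the paper on $p$-closedness, where the paper says only ``a similar computation'' and silently uses the fact you make explicit --- that in characteristic $p$ the $p$-fold composite $\partial^{\circ p}$ of a derivation is again a derivation, so that it is a genuine section of $\sT_X$.
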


    \begin{proof}
      To show that $\sF_f$ is closed under the Lie bracket, fix open sets
      $V \subseteq U \subseteq X$ and a function $s \in f^{-1}\sO_Y(V)$. For all $\partial, \partial' \in \sF_f(U)$,
      compute the Lie bracket to find
      \[
      [\partial, \partial'](s) = \partial \bigl ( \partial'(s) \bigr ) - \partial' \bigl (\partial(s) \bigr ) = 0.
      \]
      A similar computation shows that $\sF_f$ is $p$-closed.

      Now we show that $\sF_f$ is a saturated in $\sT_X$. If $\sF_f = \sT_X$ or $\sF_f = 0$, then
      there is nothing to prove, so let us assume $\sF_f$ is a proper subsheaf. Fix an open set
      $U \subseteq X$. Suppose for a contradiction that ${\tau}$ is a \emph{non-zero} torsion
      section in $\factor{\sT_X}{\sF_f}(U)$. By shrinking $U$ to a smaller open set, if necessary,
      we can choose a lift  $\partial \in \sT_X(U)$ of $\tau$.
      Since $\tau$ is non zero, this implies that $\partial \notin \sF_f(U)$. Since
      $\tau$ is a torsion section, there exists a non zero function $r \in \sO_X(U)$
      such that $r \cdot \tau = 0$. This means that $r \cdot \partial \in \sF_f(U)$.

      To arrive at the contradiction, pick a section $t \in f^{-1}\sO_Y(V)$ for some
      $V \subseteq U$ open such that its evaluation $\partial(t) \neq 0$ however then $\bigl (r \cdot \partial \bigr )(t) = 0$.
      Since $X$ is integral, as it is normal, this leads to contradiction as it suggests
      that $r$ is a zero divisor.
    \end{proof}

    \subsubsection{Ekedahl's Theorem}

    Restricting the class of morphisms in Construction \ref{finfoliation}, we obtain
    a correspondence between $p$-closed foliations and purely inseparable $k$-morphisms of
    height one.

   \begin{rem}
    It is a necessary condition that $k$ is a perfect field in Theorem \ref{ekedahl} below.
   \end{rem}

  \begin{thm}[{\cite[Proposition 2.9]{PatWa18}}] \label{ekedahl} In Setting \ref{poscar}, let $X$ be a normal $k$-variety.
    There is a 1-1 correspondence between
    \begin{enumerate}
      \item $p$-closed foliations $\sF \subset \sT_X$, and
      \item purely inseparable $k$-morphisms $X \lra Z$ of height one.
    \end{enumerate}
    The correspondence is given by:
    \begin{itemize}
      \item sending the foliation $\sF$ to the quotient of $X$ by $\sF$, and
      \item sending the morphism $X \xrightarrow{g} Z$ to the $p$-closed foliation $\sF_g$.
    \end{itemize}
    Moreover the correspondence satisfies $[k(Z):k(X)] = p^{\rank \sF}$. \qed \qedhere
  \end{thm}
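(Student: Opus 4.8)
The engine of the proof is the classical correspondence of Jacobson at the level of function fields, and the remaining work is to globalise it using normality and saturation. Write $K = k(X)$. At the generic point a $p$-closed foliation restricts to a sub-$K$-vector space $\mathfrak{g} \subseteq \mathrm{Der}_k(K)$ that is closed under the Lie bracket and under $p$-th powers, and Jacobson's theorem sets up a bijection between such $\mathfrak{g}$ and the intermediate fields $L$ with $K^p \subseteq L \subseteq K$, under which $L$ is the field of $\mathfrak{g}$-constants, $\mathfrak{g}$ is recovered as the space of all $k$-derivations of $K$ that kill $L$, and $[K:L] = p^{\dim_K \mathfrak{g}}$. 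The two operations to be matched, namely the quotient $X \to \Spec_X \sA_\sF$ of Construction \ref{foltomor} and the foliation $\sF_f$ of Construction \ref{finfoliation}, both specialise at the generic point to the two directions of this bijection, so the real content of the theorem is that they are well defined over all of $X$ and are mutually inverse as sheaf-theoretic constructions.

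First I would check that the quotient $g\colon X \to Z := \Spec_X \sA_\sF$ is a purely inseparable $k$-morphism of height one with $Z$ \emph{normal}. Height one and the factorisation $h \circ g = F_X$ are built into the diagram of Construction \ref{foltomor}, and pure inseparability follows because $\sO_X^{\,p} \subseteq \sA_\sF$, since $\partial(s^p) = 0$ in characteristic $p$; thus $K^p \subseteq k(Z) \subseteq K$. For normality I would show $\sA_\sF = L \cap \sO_X$ inside $K$, where $L = k(Z) = \mathrm{Frac}(\sA_\sF)$: any element of $L \cap \sO_X$ is regular on $X$ and lies in the constant field $L$, hence is annihilated by $\sF$ and so lies in $\sA_\sF$. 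This exhibits $\sA_\sF$ as the intersection of the normal ring $\sO_X$ with a subfield, which is integrally closed, so $Z$ is normal.

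Next I would prove the two compositions are the identity. For $\sF \mapsto g \mapsto \sF_g$, the inclusion $\sF \subseteq \sF_g$ is immediate, since every local section of $\sF$ kills $\sA_\sF = g^{-1}\sO_Z$. By Jacobson the two subsheaves share the same generic fibre $\mathfrak{g}$, so $\sF_g/\sF$ is a torsion sheaf; but it injects into $\sT_X/\sF$, which is torsion-free because $\sF$ is saturated, forcing $\sF_g = \sF$. For $g \mapsto \sF_g \mapsto$ quotient I must show $\sA_{\sF_g} = g^{-1}\sO_Z$, where the inclusion $\supseteq$ is the definition of $\sF_g$. For $\subseteq$, a function annihilated by every derivation killing $\sO_Z$ lies in $L = k(Z)$ by the generic Jacobson correspondence, and, being regular on $X$, it is integral over $\sO_Z$ because $g$ is finite; as $Z$ is normal it therefore lies in $g^{-1}\sO_Z$. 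Finally, the degree of the purely inseparable extension $k(X)/k(Z)$ equals $p^{\rank \sF}$, since this is exactly the dimension count $[K:L] = p^{\dim_K \mathfrak{g}}$ from Jacobson together with $\rank \sF = \dim_K \mathfrak{g}$.

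The main obstacle is the globalisation rather than the generic picture: everything is transparent over the function field, but upgrading the generic bijection to an equality of \emph{sheaves} forces one to use normality in two essential places, namely to guarantee that the quotient $Z$ is again a normal variety, and to know that $\sO_Z$ is integrally closed enough that it coincides with $L \cap g_*\sO_X$. The saturation hypothesis plays the dual role of singling out, among all $p$-closed subsheaves of $\sT_X$ with a given generic fibre, the unique representative recoverable as some $\sF_g$; without it the map from foliations to morphisms would fail to be injective, as the torsion-freeness argument for $\sF_g = \sF$ would break down.
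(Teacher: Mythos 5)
The paper does not actually prove this theorem: it is quoted as a black box from \cite[Proposition 2.9]{PatWa18} (note the \verb+\qed+ in the statement itself), so there is no internal proof to compare against. Your argument is essentially the standard proof of that cited result, and it is correct in outline: Jacobson's exponent-one Galois correspondence between finite-dimensional restricted Lie subalgebras of $\mathrm{Der}_k(K)$ and intermediate fields $K^p \subseteq L \subseteq K$ settles everything at the generic point, and normality plus saturation upgrade the generic bijection to an equality of sheaves, exactly as in the literature. Two points deserve more care than you give them. First, closure of the generic fibre $\mathfrak{g} = \sF_\eta$ under $p$-th powers is not automatic from $p$-closedness of $\sF$: the assignment $\partial \mapsto \partial^{[p]}$ is not $K$-linear, so passing from sections of $\sF$ to their $K$-span requires Hochschild's identity $(a\partial)^{[p]} = a^p \partial^{[p]} + \bigl((a\partial)^{p-1}(a)\bigr)\partial$; without this the hypothesis of Jacobson's theorem is not verified for $\mathfrak{g}$. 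Second, you never check that $Z = \Spec_X \sA_\sF$ is a $k$-variety of finite type and that the quotient morphism is finite; this follows from coherence of $\sA_\sF$ as a subsheaf of $\sO_{F_X}$, which in turn needs the absolute Frobenius of $X$ to be a \emph{finite} morphism --- precisely where perfectness of $k$ (flagged in the paper's remark just before the theorem) enters, a hypothesis your write-up otherwise uses only implicitly when identifying $\mathrm{Der}_k(K)$ with $\mathrm{Der}_{K^p}(K)$. Both gaps are standard and fixable, so I would accept the proof. Incidentally, your degree formula $[K:L] = p^{\dim_K \mathfrak{g}}$ is the correct one; the paper's statement $[k(Z):k(X)] = p^{\rank \sF}$ has the extension written backwards, since $k(Z) \subseteq k(X)$.
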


  \begin{rem}
    Theorem \ref{ekedahl} asserts that the quotient by a $p$-closed foliation is in particular
    a normal variety, and the quotient morphism is finite and surjective.
  \end{rem}

  \subsection{Factorising purely inseparable morphisms}

  Using Theorem \ref{ekedahl}, we will factorise a purely inseparable $k$-morphism
  into height one pieces.

  \begin{prop}\label{factorisation}
    Let $f:X \lra Y$ be a purely inseparable $k$-morphism between normal
    $k$-varieties. Then there exists a positive integer $l$ and normal $k$-varieties $Z_1, ..., Z_l$
    that factorise $f$
      \[
        X = Z_1 \lra Z_2 \lra \cdots \lra Z_l = Y.
      \]
    The morphisms $Z_{\bullet} \lra Z_{\bullet + 1}$ are
    purely inseparable $k$-morphisms of height one.
  \end{prop}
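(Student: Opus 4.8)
The plan is to peel off one height-one morphism at a time, lowering the degree of the purely inseparable extension at each step. I may assume $X \neq Y$, as otherwise $l = 1$ suffices; write $K := k(Y) \subseteq L := k(X)$, which is a nontrivial finite purely inseparable field extension since $f$ is finite and purely inseparable. The inductive step is the following. Suppose $g : W \lra Y$ is a finite surjective purely inseparable $k$-morphism of normal $k$-varieties with $W \neq Y$. To $g$ I attach the foliation $\sF_g \subseteq \sT_W$ of Construction \ref{finfoliation}, which is $p$-closed by the subsequent lemma, and then apply Theorem \ref{ekedahl} to $\sF_g$ to obtain the quotient $q : W \lra W'$, a purely inseparable $k$-morphism of height one onto a normal $k$-variety $W'$.

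Next I would check that this quotient genuinely interpolates between $W$ and $Y$. By the very definition in Construction \ref{finfoliation}, the foliation $\sF_g$ annihilates $g^{-1}\sO_Y$, so every local section of $g^{-1}\sO_Y$ lies in the subsheaf $\sA_{\sF_g} \subseteq \sO_{F_W}$ whose relative spectrum defines $W' = \Spec_W \sA_{\sF_g}$. This inclusion of $k$-algebras yields a $k$-morphism $g' : W' \lra Y$ with $g = g' \circ q$. Since $g$ is finite and $q$ is finite surjective, $g'$ is finite surjective; and as $k(Y) \subseteq k(W') \subseteq k(W)$ with $k(W)/k(Y)$ purely inseparable, the sub-extension $k(W')/k(Y)$ is purely inseparable, so $g'$ falls under the inductive hypothesis.

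It remains to guarantee termination. Because $k(W)/k(Y)$ is a nontrivial finite purely inseparable extension it is not separable, so $\Omega_{k(W)/k(Y)} \neq 0$ and there exists a nonzero $k(Y)$-derivation of $k(W)$. Evaluating at the generic point of $W$, this shows $\sF_g \neq 0$, whence $\rank \sF_g \geq 1$ and Theorem \ref{ekedahl} gives $[k(W):k(W')] = p^{\rank \sF_g} \geq p$. Thus the degree $[k(W):k(Y)]$ drops strictly at each step. Iterating the assignment $W \mapsto W'$ starting from $g = f$ therefore reaches $Y$ after finitely many steps, and recording the intermediate varieties as $X = Z_1 \lra Z_2 \lra \cdots \lra Z_l = Y$ produces the desired factorisation into purely inseparable $k$-morphisms of height one.

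The main obstacle is precisely this termination argument: one must ensure $\sF_g \neq 0$ whenever $W \neq Y$, equivalently that a nontrivial finite purely inseparable extension always admits a nonzero derivation over its base. Without this the quotient by $\sF_g$ could be an isomorphism and the induction would stall. The secondary point requiring care is the factorisation $g = g' \circ q$, that is, confirming $W'$ sits above $Y$; this follows directly from the defining property that $\sF_g$ kills $g^{-1}\sO_Y$, giving the inclusion $g^{-1}\sO_Y \subseteq \sA_{\sF_g}$.
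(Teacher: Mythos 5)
Your proposal is correct and follows essentially the same route as the paper: factor off the height-one quotient by the foliation of Construction \ref{finfoliation} via Theorem \ref{ekedahl}, descend this quotient to a morphism onto $Y$ using the inclusion $g^{-1}\sO_Y \subseteq \sA_{\sF_g}$, and iterate. Your termination argument (the foliation is nonzero because a nontrivial purely inseparable extension admits a nonzero derivation, so the degree drops by a factor of at least $p$) is precisely the content of the paper's Lemma \ref{nonzero} combined with the degree formula of Theorem \ref{ekedahl}, stated in a slightly more quantitative form.
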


  To prove Proposition \ref{factorisation}, we will need the following technical
  lemma.

  \begin{lem}\label{nonzero} In Construction \ref{finfoliation}, the foliation $\sF_f$ is
    zero if and only if $f$ is a separable morphism.
  \end{lem}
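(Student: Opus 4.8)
The plan is to reinterpret $\sF_f$ as the relative tangent sheaf and then to detect its vanishing at the generic point, where the question reduces to a standard fact about field extensions. By construction $\sF_f$ consists of those $k$-linear derivations of $\sO_X$ that annihilate $f^{-1}\sO_Y$; that is, $\sF_f = \sHom_{\sO_X}(\Omega^1_{X/Y}, \sO_X)$, the relative tangent sheaf $\sT_{X/Y}$. Since $\sF_f$ is a subsheaf of $\sT_X = (\Omega^1_X)^\vee$, which is torsion-free because $X$ is integral, the sheaf $\sF_f$ is itself torsion-free. Hence $\sF_f = 0$ if and only if its stalk at the generic point $\eta \in X$ vanishes, the point being that the localisation map $\sF_f \lra (\sF_f)_\eta$ is injective on a torsion-free sheaf over an integral scheme.

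Next I would compute the generic stalk. As $f$ is surjective, $f(\eta)$ is the generic point of $Y$, so $\sO_{X,\eta} = k(X)$ and $\sO_{Y, f(\eta)} = k(Y)$. Because Kähler differentials commute with localisation and $\Omega^1_{X/Y}$ is coherent, one obtains $(\Omega^1_{X/Y})_\eta = \Omega^1_{k(X)/k(Y)}$ and therefore
\[
(\sF_f)_\eta = \Hom_{k(X)}\bigl(\Omega^1_{k(X)/k(Y)}, k(X)\bigr) = \operatorname{Der}_{k(Y)}\bigl(k(X)\bigr),
\]
the space of $k(Y)$-linear derivations of $k(X)$ (these are automatically $k$-linear, since $k \subseteq k(Y)$).

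Finally I would appeal to the differential criterion for separability: for the finite field extension $k(Y) \subseteq k(X)$, one has $\Omega^1_{k(X)/k(Y)} = 0$ if and only if $k(X)/k(Y)$ is separable. As $\Omega^1_{k(X)/k(Y)}$ is a finite-dimensional $k(X)$-vector space, its vanishing is equivalent to the vanishing of its dual $\operatorname{Der}_{k(Y)}(k(X))$. Since $f$ is separable precisely when $k(X)/k(Y)$ is separable, chaining these equivalences gives $\sF_f = 0 \iff f$ is separable. The only genuinely delicate point is the reduction to the generic fibre — justifying both that torsion-freeness lets one test vanishing at $\eta$ and that localisation identifies $(\sF_f)_\eta$ with $\operatorname{Der}_{k(Y)}(k(X))$; once this is in place, the field-theoretic equivalence is classical.
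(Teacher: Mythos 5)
Your proof is correct and takes essentially the same route as the paper: both identify $\sF_f$ with $\bigl(\Omega^1_{X/Y}\bigr)^{\vee}$ (the paper isolates this as Lemma \ref{conection}), pass to the generic point where the stalk becomes $\bigl(\Omega^1_{k(X)/k(Y)}\bigr)^{\vee}$, and conclude via the differential criterion for separability of the finite extension $k(Y) \subseteq k(X)$. The only cosmetic difference is that you treat both implications uniformly by observing that $\sF_f$ is torsion-free, so vanishing can be tested at the generic point, whereas the paper argues the separable direction separately by noting that $\Omega^1_{X/Y}$ is torsion (supported on the ramification locus) and hence has vanishing dual.
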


  \begin{rem}
    The consequence of Lemma \ref{nonzero} is that, the quotient by $\sF_f$ is the
    identity morphism if and only if $f$ is a separable morphism.
  \end{rem}

  We will prove Lemma \ref{nonzero} after we give a proof of
  Proposition \ref{factorisation}.

  \begin{proof}[Proof of Proposition \ref{factorisation}]
    If the morphism $f:X \lra Y$ is the identity, we are done. Supposing otherwise,
    we will show that then it is possible to factorise $f: X \lra Y$ into
    \begin{equation}\label{purefactor}
      X \xrightarrow{g} Z_{2} \xrightarrow{f_2} Y
    \end{equation}
    where $g$ is a purely inseparable $k$-morphism of height one and $f_2$ is a
    purely inseparable $k$-morphism.
    The morphism $g$ is simply the quotient of the foliation associated to the
    finite morphism $f: X \lra Y$, which by Theorem \ref{ekedahl} is purely
    inseparable of height one and by Lemma \ref{nonzero} is not the identity
    morphism.

    Using the fact that purely inseparable morphisms are universal homeomorphisms
    \cite[Exercise 12.32]{GW10}, we find that $f$ and $g$ are universal
    homeomorphisms and that the underlying topological spaces of $Y$, $X$ and $Z_{2}$
    are all homeomorphic. Hence the morphism $f_{2}: |Z_2| \lra |Y|$, between
    the underlying topological spaces is given  by $g^{-1} \circ f$.

    For the morphism of sheaves of rings, observe that there is an inclusion of sheaves
    $f^{-1} \sO_Y \subseteq g^{-1} \sO_{Z_{2}} \subseteq \sO_{X}$.
    The sheaf $g^{-1} \sO_{Z_{2}}$ is the sheaf of functions annihilated by the
    foliation $\sF_{f}$, which contains $f^{-1}\sO_Y$, as in Construction \ref{foltomor}.
    For an open set $V \subseteq X$ we obtain a ring homomorphism
    \begin{equation}\label{sheafmorph}
      \sO_Y\bigl ( f(V)\bigr ) \hookrightarrow \sO_{Z_{2}} \bigl ( g(V) \bigr).
    \end{equation}
    For any open set $U \subseteq Z_{2}$, by setting $V = g^{-1}(U)$ in Equation
    \eqref{sheafmorph}, we obtain a well defined morphism of sheaves
    $f_{2}^{\#}: f^{-1}_{2}\sO_Y \lra \sO_{Z_{2}}$. Hence we get a morphism of
    schemes $f_{2}: Z_{2} \lra Y$ factorising $f$ as found in Equation
    \eqref{purefactor}.

    We proceed by applying the same argument to $f_2: Z_2 \lra Y$. Since $f: X \lra Y$
    a finite morphism, this process of factorising must stop after a finite number
    of steps. This proves the proposition.
  \end{proof}

  In preparation to prove Lemma \ref{nonzero}, it will be useful to give an alternative
  description of $\sF_f$ in terms of differentials, to give us a concrete connection
  between $\sF_f$ and the separability of $f$.

  \begin{construction}\label{Ann} In the Setting of Construction \ref{finfoliation},
    Let $\sM$ be the image of the map $f^* \Omega^1_Y \lra \Omega^1_X$.
    Denote by $\Ann{\sM}$ the sections of $\sT_X$ that are annihilated by $\sM$.
    This is described in the following exact sequence:
    \begin{equation}\label{Annseq}
    0 \lra \Ann{\sM} \lra \sT_X \lra \bigl (\sM \bigr )^\vee .
    \end{equation}
    Moreover, since the quotient of $f^*\Omega^1_Y$ by $\sM$ is $\Omega^1_{X/Y}$, we find
    that $\Ann{\sM} = \bigl (\Omega^1_{X/Y} \bigr )^{\vee}$.
  \end{construction}

  \begin{lem}\label{conection} The sheaves $\sF_f$ and $\Ann \sM$ constructed in Construction
    \ref{finfoliation} and Construction \ref{Ann} are isomorphic.
  \end{lem}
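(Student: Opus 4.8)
The plan is to identify both $\sF_f$ and $\Ann\sM$ with the sheaf of $k$-derivations of $\sO_X$ that annihilate $f^{-1}\sO_Y$, i.e.\ with $\bigl(\Omega^1_{X/Y}\bigr)^\vee$, and then to observe that their defining conditions are the same once read through this identification. The starting point is the universal property of the sheaf of Kähler differentials: for any $k$-scheme $X$, contraction $\phi \mapsto \phi \circ d$ with the universal derivation $d : \sO_X \lra \Omega^1_X$ yields an isomorphism of $\sO_X$-modules
\[
  \sT_X = \sHom_{\sO_X}\bigl(\Omega^1_X, \sO_X\bigr) \xrightarrow{\ \sim\ } \{\, k\text{-derivations } \sO_X \lra \sO_X \,\}.
\]
Normality of $X$ plays no role here. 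Under this isomorphism a local section $\partial \in \sT_X(U)$, viewed as a derivation, sends $s \in \sO_X(U)$ to $\phi(ds)$.

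Next I would unwind the submodule $\sM$ from Construction \ref{Ann}. Locally the natural map $f^*\Omega^1_Y \lra \Omega^1_X$ sends $g \otimes dt \longmapsto g\cdot d(f^\# t)$, so $\sM$ is the $\sO_X$-submodule of $\Omega^1_X$ generated by the differentials $ds$ with $s$ a local section of $f^{-1}\sO_Y \subseteq \sO_X$. Because $\phi$ is $\sO_X$-linear, $\phi$ annihilates $\sM$ if and only if it annihilates each generator $ds$; translating through the identification above, this says precisely that the associated derivation $\partial$ satisfies $\partial s = 0$ for all local sections $s$ of $f^{-1}\sO_Y$. That is exactly the condition defining $\sF_f$ in Construction \ref{finfoliation}. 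Hence the isomorphism $\sT_X \cong \{k\text{-derivations}\}$ carries $\Ann\sM$ onto $\sF_f$; equivalently both coincide with $\bigl(\Omega^1_{X/Y}\bigr)^\vee$, which Construction \ref{Ann} already records for $\Ann\sM$.

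The only point requiring care, and the step I expect to be the main (if mild) obstacle, is checking that it suffices to test annihilation on the generators $ds$ rather than on all of $\sM$, and that the resulting equality of subsheaves of $\sT_X$ is genuinely sheaf-theoretic. I would settle this by working on a basis of affine opens $U = \Spec A$ with $f(U) = \Spec B$ and $A$ a finite $B$-algebra (finite morphisms being affine), where the statement reduces to the standard identity that the $B$-linear $k$-derivations of $A$ are exactly the $k$-derivations killing $B$ and coincide with $\Hom_A\bigl(\Omega^1_{A/B}, A\bigr)$. Since this identification is compatible with restriction of opens, it glues to the asserted isomorphism $\sF_f \cong \Ann\sM$ of sheaves on $X$.
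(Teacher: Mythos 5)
Your proof is correct and takes essentially the same approach as the paper: both identify sections of $\sT_X$ with derivations via pairing against $d$, then use that $\sM$ is generated as an $\sO_X$-module by the differentials $ds$ of local sections $s$ of $f^{-1}\sO_Y$, so that $\sO_X$-linearity reduces annihilation of $\sM$ to annihilation of these generators, which is precisely the defining condition of $\sF_f$. The concluding affine-local verification you propose is a harmless elaboration of what the paper carries out directly with local sections and generators.
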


  \begin{proof}
    Let $U \subseteq X$ be an open set and fix a section $s \in f^{-1} \sO_{Y}(U)$
    and fix a derivation $\partial \in \Ann \sM(U)$.
    Observe that $ds \in \sM (U)$. We can compute that $\partial (ds) = \partial (s) = 0$,
    which shows that $\partial \in \sF_f(U)$.

    Conversely, fix a derivation $\partial \in \sF_f(U)$ for any open set $U \subseteq X$.
    We need to show that for any $\omega \in \sM (U)$, that $\partial \omega = 0$. Since
    $\sM$ is locally generated by sections of the form $f^* dy$, where $y$ is a local section of $\sO_Y$;
    it is enough to check that $\partial (f^* dy) = 0$. This follows from the
    computation $\partial (f^* dy) = f^* (\partial dy) = 0$, since $f^*y$ is
    a local section of $f^{-1}\sO_Y$.
  \end{proof}

  \begin{proof}[Proof of Lemma \ref{nonzero}]
    By Lemma \ref{conection} and Construction \ref{Ann}, we get that
    $\sF_f = \Ann \sM = \bigl (\Omega^1_{X/Y} \bigr )^{\vee}$. This allows us to work
    with relative differentials.

    Suppose that $f:X \lra Y$ is separable. Since $\Omega^1_{X/Y}$ is supported
    on the ramification locus of $f$, it is torsion. Therefore the dual
    $\bigl (\Omega^1_{X/Y} \bigr )^{\vee} = \sF_f = 0$.

    Conversely, suppose that $f$ is not separable. Denote the generic
    point of $X$ by $\eta$. We compute that
    \begin{align*}
      \bigl (\Ann{\sM} \bigr )_{\eta} = \Bigl (\bigl ( \Omega^1_{X/Y} \bigr )^{\vee} \Bigr )_{\eta}
                       &= \Bigl ( \bigl (\Omega^1_{X/Y}\bigr)_{\eta} \Bigr )^{\vee} &\textmd{\cite[III Propostion 6.8]{Hart}} \\
                       &= \bigr( \Omega^1_{K(X)/K(Y)})^{\vee} \neq 0  &\textmd{\cite[II Theorem 8.6A]{Hart}}.
    \end{align*}
    Hence $\sF_f \neq 0$.
  \end{proof}

  \subsection{Stable base locus under purely inseparable morphisms}
  In this subsection, we make use of the Frobenius morphism to understand the
  pullback of the stable base locus via a purely inseparable height one morphism.

  \begin{lem}\label{frobase}
    In Setting \ref{poscar}, let $F_X: X \lra X$ be the absolute Frobenius
    morphism and let $D$ be a $\bQ$-divisor on $X$. Then $\bB(F_X^*D) = F_X^{-1}\bB(D)$.
  \end{lem}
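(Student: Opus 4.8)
The plan is to reduce both sides to $\bB(D)$ by exploiting the two defining features of the absolute Frobenius: it is the identity on the underlying topological space, and its comorphism $F^\#_X$ raises a local function to its $p$-th power. The first feature collapses the right-hand side, while the second turns the pullback $F_X^*D$ into the scalar multiple $pD$, at which point the scaling invariance of the stable base locus finishes the argument.

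First I would compute the Frobenius pullback of a Cartier divisor $L$. Representing $L$ by local data $\{(U_i, f_i)\}$ and using that $F_X$ is a homeomorphism with $F_X^{-1}(U_i) = U_i$ together with $F^\#_X(f_i) = f_i^p$, the pullback $F_X^*L$ is represented by $\{(U_i, f_i^p)\}$; that is, $F_X^*L = pL$. Extending $\bQ$-linearly gives $F_X^*D = pD$ for every $\bQ$-divisor $D$.

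Next I would invoke the scaling invariance of the stable base locus. The remark following Definition \ref{stab} yields $\bB(pL) = \bB(L)$ for every Cartier divisor $L$. Choosing a positive integer $b$ with $L := bD$ Cartier and unwinding the definition of the stable base locus of a $\bQ$-divisor (Definition \ref{stab}), one obtains
\[
  \bB(F_X^*D) = \bB(pD) = \bB(pL) = \bB(L) = \bB(D).
\]
Finally, since $F_X$ is the identity on $|X|$, the scheme-theoretic preimage of the closed subscheme $\bB(D)$ has underlying closed subset $F_X^{-1}\bigl(|\bB(D)|\bigr) = |\bB(D)|$, and its reduction is therefore $\bB(D)$ itself, i.e. $F_X^{-1}\bB(D) = \bB(D)$ as reduced schemes. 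Comparing with the display above gives the claim.

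The only step that is not purely formal is the identity $F_X^*L = pL$, and even this is immediate from the explicit description of $F^\#_X$; in contrast to the separable case, no $G$-invariant sections need to be produced and no genuine geometric input is required. Accordingly I expect the write-up to be short, with the one point deserving care being the bookkeeping that passes from the Cartier identity $F_X^*L=pL$ to the $\bQ$-divisor statement via the chosen clearing integer $b$.
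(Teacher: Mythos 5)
Your proposal is correct and follows essentially the same route as the paper's own proof: compute $F_X^*D = pD$, use the scaling invariance $\bB(pD)=\bB(D)$, and collapse the right-hand side using the fact that $F_X$ is the identity on the underlying topological space. You simply spell out the details (the local-equation computation of $F_X^*L = pL$ and the bookkeeping with the clearing integer $b$) that the paper leaves implicit.
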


  \begin{proof}
    The left hand side of the expression is simply $\bB(pD)$ since $F_X^*D = pD$.
    The right hand side is $\bB(D)$ as the stable base locus is a closed subset of
    $X$ and the absolute Frobenius morphism leaves the points of $X$ fixed. The
    expression immediately follows from the observation that $\bB(pD) = \bB(D)$.
  \end{proof}

  \begin{lem}\label{insapbase}
    In Setting \ref{poscar}, let $g:X \lra Z$ be a purely inseparable
    height one morphism\footnote{not necessarily a $k$-morphism}
    and $D$ a $\bQ$-divisor on $Z$. Then $\bB(g^*D) = g^{-1}\bB(D)$.
  \end{lem}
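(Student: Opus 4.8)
The plan is to reduce the statement to the Frobenius computation of Lemma \ref{frobase} by means of the squeeze in Corollary \ref{bigsqueeze}. The inclusion $\bB(g^*D) \subseteq g^{-1}\bB(D)$ is already furnished by Lemma \ref{trivial}, so only the reverse inclusion is at issue, and I would obtain it from the corollary rather than by hand. By the definition of a purely inseparable morphism of height one there is a morphism $h: Z \lra X$ with $h \circ g = F_X$. The pivotal observation is that the \emph{reverse} composite satisfies $g \circ h = F_Z$, so that the absolute Frobenius of $Z$ factors as $Z \xrightarrow{h} X \xrightarrow{g} Z$.

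First I would establish $g \circ h = F_Z$. Since purely inseparable morphisms are universal homeomorphisms \cite[Exercise 12.32]{GW10}, both $g$ and $h$ are homeomorphisms on underlying spaces; as $h \circ g = F_X$ is the identity on $|X|$, the maps $g$ and $h$ are mutually inverse homeomorphisms, so in particular $g \circ h$ is the identity on $|Z|$, exactly like $F_Z$. On structure sheaves I would identify $\sO_Z$ with the subsheaf $g^\#(\sO_Z) \subseteq \sO_X$; the relation $g^\# \circ h^\# = F_X^\#$ coming from $h\circ g = F_X$ forces $h^\#$ to be the $p$-th power map $s \mapsto s^p$, and hence $(g\circ h)^\# = h^\# \circ g^\#$ is also $t \mapsto t^p$ on $\sO_Z$, which is precisely $F_Z^\#$. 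Along the way one notes that $h$ is finite and surjective: surjectivity follows from that of $h\circ g = F_X$, and finiteness from a standard cancellation argument, since $F_Z = g\circ h$ is finite and $g$ is separated.

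With this in hand I would apply Corollary \ref{bigsqueeze} to the composition $Z \xrightarrow{h} X \xrightarrow{g} Z$ of finite surjective morphisms and the $\bQ$-divisor $D$ on $Z$. The composite is $g \circ h = F_Z$, and Lemma \ref{frobase}, applied to $Z$ in place of $X$, gives $\bB\bigl((g\circ h)^*D\bigr) = \bB(F_Z^*D) = F_Z^{-1}\bB(D) = (g\circ h)^{-1}\bB(D)$. Thus the hypothesis of Corollary \ref{bigsqueeze} is satisfied, and the corollary yields the desired equality $\bB(g^*D) = g^{-1}\bB(D)$.

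The main obstacle is the middle step, namely the identity $g \circ h = F_Z$; once it is available, the rest is a purely formal application of the already established Corollary \ref{bigsqueeze} and Lemma \ref{frobase}. The subtlety to handle carefully there is the bookkeeping of structure-sheaf maps against topological maps, because $g$, $h$, $F_X$ and $F_Z$ are \emph{not} $k$-morphisms, so the argument for $g\circ h = F_Z$ must be carried out on the level of function fields (or local rings) and cannot invoke any $k$-linear structure.
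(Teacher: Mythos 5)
Your proposal is correct and follows essentially the same route as the paper's own proof: both reduce to showing that the splitting $h \colon Z \lra X$ of the Frobenius satisfies $g \circ h = F_Z$, and then conclude by applying Lemma \ref{frobase} (on $Z$) together with Corollary \ref{bigsqueeze} to the composition $Z \xrightarrow{h} X \xrightarrow{g} Z$. The only difference is in how the identity $g \circ h = F_Z$ is verified --- the paper invokes the functoriality of Frobenius \cite[Tag 0CC7]{stacks-project} and cancels $g$ using that it is an epimorphism (surjective with injective $g^{\#}$), whereas you carry out the equivalent structure-sheaf computation directly, and you moreover check explicitly that $h$ is finite and surjective, a hypothesis of Corollary \ref{bigsqueeze} that the paper leaves implicit.
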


  \begin{proof}

    Since $g$ is purely inseparable of height one, there exists a morphism
    $ Z \xrightarrow{i} X$ such that $g \circ i = F_X$. We claim that
    $i \circ g = F_Z$, the Frobenius endomorphism of $Z$. Then by Lemma \ref{frobase}
    and Lemma \ref{bigsqueeze} the result follows.

    To prove the claim, observe that $F_X \circ g = g \circ F_Z$ by
    \cite[\href{https://stacks.math.columbia.edu/tag/0CC7}{Tag 0CC7}]{stacks-project}.
    Since $F_X = g \circ i$, if we can show that $g$ is an epimorphism, or in other
    words left cancellative, then the claim follows. This follows from the observation
    that $g$ is surjective and the map of sheaves of rings $g^{\#}: \sO_Z \lra \sO_X$
    is injective.
  \end{proof}

  \subsection{Proof of Theorem \ref{mainthm2}}

  We start with a series of reduction steps. It is enough to prove Statement~\ref{mainthm2:1}
  due to Lemma \ref{augmentedred}.

  In characteristic $0$, the morphism $f:X \lra Y$ is necessarily separable, this
  case is covered by Theorem~\ref{thm2}. We can then assume we are in Setting \ref{poscar},
  namely in positive characteristic.

  Since any finite field extension can be split into a separable extension followed
  by a purely inseparable extension \cite[\href{https://stacks.math.columbia.edu/tag/030K}{Tag 030K}]{stacks-project},
  we can assume without loss of generality that $f: X \lra Y$ is a purely inseparable extension.

  By Proposition \ref{factorisation}, we obtain a factorisation of $f$
  \[
    X \lra Z_1 \lra \cdots \lra Z_l \lra Y.
  \]
  Since all the morphisms in the factorisation are purely inseparable of
  height one, by Lemma \ref{insapbase} we get $\bB(f^*D) = f^{-1}\bB(D)$. \hfill   \qed


\newcommand{\etalchar}[1]{$^{#1}$}
\providecommand{\bysame}{\leavevmode\hbox to3em{\hrulefill}\thinspace}
\providecommand{\MR}{\relax\ifhmode\unskip\space\fi MR }
\providecommand{\MRhref}[2]{%
  \href{http://www.ams.org/mathscinet-getitem?mr=#1}{#2}
}
\providecommand{\href}[2]{#2}

\end{document}